\begin{document}

\title[ Constructing a Proof of the Riemann Hypothesis]{Constructing a Proof of the Riemann Hypothesis}
\author[R.C. McPhedran ]{Ross C. McPhedran$^{1,2}$
}

\affiliation{$^1$ Department of Mathematical Sciences, University of Liverpool,  Liverpool L69 7ZL, United Kingdom\\
$^2$  School of Physics, University of Sydney, Sydney NSW 2006, Australia}
\label{firstpage}

\maketitle
\begin{abstract}{Lattice sums, Riemann hypothesis, Dirichlet $L$ functions, distribution functions of zeros}
This paper compares the distribution of zeros of the Riemann zeta function $\zeta(s)$ with those of a symmetric combination of zeta functions, denoted ${\cal T}_+(s)$, known to
have all its zeros located on the critical line $\Re(s)=1/2$. Criteria are described for constructing a suitable quotient function of these, with properties advantageous for establishing an accessible proof
that $\zeta(s)$ must also have all its zeros on the critical line: the celebrated Riemann hypothesis. While the argument put forward is not at the level of rigour required to constitute a full proof of the Riemann hypothesis, it should convince non-specialists that it must hold.
\end{abstract}
\section{Introduction}
The Riemann hypothesis is renowned as a difficult and important problem in mathematics. For over one hundred and fifty years it has challenged mathematicians to provide a proof, and many notable steps
have been taken towards that goal (Titchmarsh \& Heath-Brown, 1987). The hypothesis states that all zeros of the Riemann zeta function $\zeta(s)$ must lie on the critical line $\Re s=\sigma=1/2$, where  $\Im s= t$. The approach taken here is to investigate the proof of the Riemann hypothesis  by studying the properties of a quotient function, which is composed of a numerator known to have all its zeros on the
critical line, and a denominator equal to $\zeta (2s-1/2)$, together with a balancing function, introduced for reasons to be discussed in Section 2. ThIs approach has previously been used in McPhedran, Williamson, Botten \& Nicorovici (2011; hereafter referred to as I)  to show that, if the Riemann hypothesis holds for any one of a class of angular lattice sums denoted ${\cal C}(1,4 m;s)$, then it holds for all of them, where the lowest
member of the class ${\cal C}(0,1;s)=4 \zeta(s) L_{-4}(s)$ is analytically known (Lorenz, 1871; Hardy, 1920). Here  the notation  $L_{-4}(s)$ refers to a  particular Dirichlet $L$  or beta function, (Zucker and Robertson, 1976). More recently, it has been used for the quotient function $\Delta_5(s)=\zeta(s) L_{-4}(s)/\zeta(2 s-1/2)$ to show that the Riemann hypothesis holds for $ L_{-4}(s)$  if and only if it holds
for $\zeta(s)$ (see McPhedran, 2013, hereafter referred to as II).

The numerator of the quotient function to be used here is the symmetric counterpart (denoted ${\cal T}_+(s)$) of an antisymmetric function ${\cal T}_-(s)$ constructed from two zeta functions with arguments
$2s $ and $2 s-1$. The antisymmetric function had been proved in a posthumous paper by Taylor (1945) to have all its zeros on the critical line. In a companion paper (McPhedran and Poulton, 2013; hereafter III) a proof based on the properties of the quotient function ${\cal T}_+(s)/{\cal T}_-(s)$ is used to show that not only ${\cal T}_-(s)$  but also ${\cal T}_+(s)$ have all zeros restricted to the critical line,
that the zeros of these odd and even functions alternate on that line and that all the zeros are simple. The proof used is based on the formal analogy between the theory of analytic functions and that of the logarithmic potential in electrostatics. Numerical evidence is given in III that the two combinations of zeta functions both have the same distribution functions of zeros on  the critical line $\sigma=1/2$, and that this is also the same as that of ${\cal C}(0,1;s)$ and $\zeta (2 s-1/2)$.

In Section 2, a description is given of the properties required of suitable quotient functions, before proofs are given that the particular function chosen, denoted $\Delta_6(s)$, satisfies the requirements. In Section 3, an argument similar to that used in I and II is developed, leading to the conclusion that all the zeros of $\zeta(s)$ must lie on the critical line. The reasoning set forward in Sections 2 and 3 will use the conventional theorem-proof format, but the emphasis will be on accessibility to a wide class of readers, rather than on a fully rigorous approach. Nevertheless, it is hoped that the arguments set forward here will provide a suitable basis for  a rigorous proof. Sections 2 and 3  include numerical examples of the properties under investigation. These are included to aid the readers' appreciation of the development of ideas, and the parameters have been chosen so that readers can readily construct similar examples using widely available symbolic algebra packages. There exist of course far more powerful special purpose algorithms, such as those which have been used by van de Lune and te Riele (as described in Odlyzko \& te Riele, 1985) to show that the first $1.5\times 10^9$ zeros of $\zeta (s)$ are simple and lie on the critical line.

\section{Requisite Properties of  Quotient Functions and Their Proof for $\Delta_6(s)$}
The arguments relied on in papers  I and II require quotient functions having thee key properties:
\begin{itemize}
\item an appropriate functional equation;
\item source neutrality and
\item a balanced quotient.
\end{itemize}

The quotient should satisfy a functional equation of the type pertaining to $\zeta(s)$, which can be expressed in terms of the even symmetry of the function
\begin{equation}
\xi_1(s)=\frac{\gamma (s/2) \zeta(s)}{\pi^{s/2}}=\xi_1(1-s).
\label{eq2.1}
\end{equation}
The functional equation needs to tightly connect the values of the quotient in $\sigma>1/2$ with those in $\sigma<1/2$.

To establish source neutrality, one needs to show either analytically, or exhibit a priori numerical evidence, that the distribution functions of the zeros of the numerator and denominator functions in the quotient are the same in terms which tend to infinity as $t\rightarrow \infty$. As described above, this property of source neutrality applies for example analytically to the quotient ${\cal T}_+(s)/{\cal T}_-(s)$,
and on the basis of numerical data) to ${\cal T}_+(s)/\zeta(2 s-1/2)$ (see I). The topic of source neutrality occurs in the discussion of periodic Green's functions and the related topic of lattice sums- see for example Poulton, Botten, McPhedran \& Movchan (1999) and Borwein, Borwein \& Shail (1989).

The quotient should also be balanced, using a scaling function if required to ensure that the quotient has the leading term a constant (say unity) as $\sigma\rightarrow \infty$. Ideally, the next term in the asymptotic expansion there should vary in inverse exponential fashion (say as $1/2^{2 s}$).

To use these properties, one establishes that there are infinite number of lines of constant phase of $0$ which leave $\sigma=\infty$, with intervening lines of constant amplitude, and run towards the critical line. As long as the lines of constant phase zero reach the critical line, they cannot enclose between them off-axis zeros of $\zeta(2 s-1/2)$, since there are no off-axis zeros of  ${\cal T}_+(s)$. (One can consider a closed contour composed of  two lines of phase zero, connected by lines of constant $\sigma$: the total change of phase of the quotient function round the contour is zero, and thus in order to enclose poles, it also has to enclose zeros.) Thus, as long as lines of phase zero reach the critical line from $\sigma=\infty$, there will be no off-axis zeros of $\zeta(2s -1/2)$. If one assumes that a line of phase zero fails to reach the critical line, then it and all such lines above it  has to curve up towards $t=\infty$. The functional equation satisfied by the quotient function then guarantees that the image lines
in $\sigma<1/2$ have to have phase and amplitude properties precluded by its asymptotic analysis, a contradiction which guarantees that the lines of phase zero invariably reach the critical line. For further commentary on the three properties prescribed above, see the Appendix.

 The detailed elaboration of this brief summary will now be given. The definition which will be adopted for $\Delta_6(s)$ is
\begin{equation}
\Delta_6(s)=\frac{\xi_1(2 s)+\xi_1(2 s-1)}{\zeta(2 s-1/2) \left( \frac{\Gamma(s)}{\pi^s}+ \frac{\Gamma(s-1/2)}{\pi^{s-1/2}}\right)},
\label{eq2.2}
\end{equation}
or
\begin{equation}
\Delta_6(s)=\frac{\xi_1(2 s)+\xi_1(2 s-1)}{\xi_1(2 s-1/2) {\cal D}(s)},~~{\rm where}~~  {\cal D}(s)=\left[\frac{\pi^{-1/4}\Gamma(s)+\pi^{1/4}\Gamma(s-1/2)}{\Gamma(s-1/4)}\right].
\label{eq2.3}
\end{equation}
Another useful form for asymptotic evaluation of $\Delta_6 (s)$ is
\begin{equation}
\Delta_6(s)=\frac{{\cal A}(s) \xi_1(2 s)+(1-{\cal A}(s))\xi_1(2 s-1)}{\xi_1(2 s-1/2)} ,~~{\rm where}~~  {\cal A}(s)=1/(1+\sqrt{\pi} \Gamma(s-1/2)/\Gamma(s)).
\label{eq2.4}
\end{equation}

\begin{lemma}
The functional equation satisfied by ${\cal A}(s)$ is
\begin{equation}
{\cal A}(1-s)=\frac{1}{1+\tan(\pi s)\left(1/{\cal A}(s+1/2)-1\right)}.
\label{eq2.5}
\end{equation}
The asymptotic expansion for ${\cal A}(s)$ when $|s|$ is large associated with (\ref{eq2.4}) is:
\begin{equation}
{\cal A}(s)\sim1/\left[1+\sqrt{\frac{\pi}{s-1/2}}\left(1+\frac{1}{8(s-1/2)}+\frac{1}{128(s-1/2)^2}-\frac{5}{1024(s-1/2)^3}+O\left(\frac{1}{(s-1/2)^4}\right)\right)\right]
\label{eq2.6}
\end{equation}
\end{lemma}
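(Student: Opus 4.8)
The plan is to prove the two assertions separately; both are elementary consequences of properties of the Gamma function, with the asymptotic part requiring the more careful bookkeeping.

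For the functional equation, I would start from the definition in (\ref{eq2.4}), which gives $1/{\cal A}(s)-1=\sqrt{\pi}\,\Gamma(s-1/2)/\Gamma(s)$. Evaluating this at $1-s$ and at $s+1/2$ yields $1/{\cal A}(1-s)-1=\sqrt{\pi}\,\Gamma(1/2-s)/\Gamma(1-s)$ and $1/{\cal A}(s+1/2)-1=\sqrt{\pi}\,\Gamma(s)/\Gamma(s+1/2)$. Dividing the first of these by the second and cancelling the common factor $\sqrt{\pi}$, the task reduces to showing that
\[
\frac{\Gamma(1/2-s)\,\Gamma(s+1/2)}{\Gamma(1-s)\,\Gamma(s)}=\tan(\pi s).
\]
This follows from two applications of the reflection formula $\Gamma(w)\Gamma(1-w)=\pi/\sin(\pi w)$: with $w=s$ the denominator equals $\pi/\sin(\pi s)$, and with $w=s+1/2$ the numerator equals $\pi/\cos(\pi s)$, so the quotient collapses to $\sin(\pi s)/\cos(\pi s)$. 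Rearranging the identity $1/{\cal A}(1-s)-1=\tan(\pi s)\,(1/{\cal A}(s+1/2)-1)$ then gives (\ref{eq2.5}).

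For the asymptotic expansion, I would set $z=s-1/2$, so that $\sqrt{\pi}\,\Gamma(s-1/2)/\Gamma(s)=\sqrt{\pi}\,\Gamma(z)/\Gamma(z+1/2)$, and then use the standard large-$z$ expansion of a ratio of Gamma functions, $\Gamma(z+a)/\Gamma(z+b)\sim z^{a-b}\left(1+\frac{(a-b)(a+b-1)}{2z}+\cdots\right)$, with $a=0$ and $b=1/2$; alternatively one inserts Stirling's series for $\ln\Gamma$ into $\ln\Gamma(z)-\ln\Gamma(z+1/2)$, expands in powers of $1/z$ and re-exponentiates. The leading term is $z^{-1/2}$ and the first correction is $1/(8z)$; carrying the expansion to order $z^{-7/2}$ produces the coefficients $1/128$ and $-5/1024$. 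Substituting this result, expressed in the variable $s-1/2$, into ${\cal A}(s)=1/\left(1+\sqrt{\pi}\,\Gamma(s-1/2)/\Gamma(s)\right)$ gives (\ref{eq2.6}), with the $O((s-1/2)^{-4})$ error inherited from the remainder in Stirling's series, and the argument valid in sectors bounded away from the negative real axis.

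The main obstacle is not conceptual but computational: the functional-equation part is automatic once the two reflection identities are written down, whereas obtaining the precise coefficients $1/8$, $1/128$, $-5/1024$ in (\ref{eq2.6}) requires carrying enough terms of Stirling's series and keeping the half-integer powers organised into a clean series in $1/(s-1/2)$. A useful check on the algebra and on the sign conventions is that (\ref{eq2.5}) and (\ref{eq2.6}) must be mutually consistent, so that feeding the expansion through the functional equation reproduces the expected behaviour of ${\cal A}$ on the reflected side of the critical line.
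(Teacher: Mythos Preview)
Your proposal is correct and follows exactly the route sketched in the paper: the functional equation is obtained from the definition of ${\cal A}(s)$ together with the reflection formula for $\Gamma$, and the asymptotic expansion comes from Stirling's series applied to the ratio $\Gamma(s-1/2)/\Gamma(s)$. The only difference is that you have spelled out in detail what the paper leaves as a one-line remark.
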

\begin{proof}
The functional equation (\ref{eq2.5}) follows from the definition (\ref{eq2.4}), together with the functional equation for the $\Gamma$ function. The asymptotic expansion (\ref{eq2.6}) follows from Stirling's formula.
\end{proof}

\begin{theorem}
The analytic function $\Delta_6(s)$
 obeys the functional equation
\begin{equation}
\Delta_6(1-s)={\cal F}_6(s) \Delta_6(s),
\label{eq2.7}
\end{equation} 
where
\begin{equation}
{\cal F}_6(s) =\frac{ {\cal D}(s)}{ {\cal D}(1-s)}.
\label{eq2.8}
\end{equation}
\label{thm6-1}
It is monotonic decreasing along the positive real axis, after its first-order pole at $\sigma=1$. 
On the negative real axis, it has an infinite sequence of first-order zeros at  $\sigma=-2 n,-2n -1/2$ where $n=0,1,2,\ldots$. It has first-order zeros at 1/2 and 3/4. There are also first-order poles, which occur in pairs in intervals $(-2 n,-2n+1/2)$.
Its phase on the critical line, apart from phase jumps occurring at its zeros and poles there,  is well approximated for large $t$ by
\begin{equation}
\arg \Delta_6(\frac{1}{2}+i t)\sim-\frac{\pi}{8}+\frac{\sqrt{\pi/(2 t)}}{\left(1+\sqrt{\pi/(2 t)}\right)} ~({\rm modulo}~\pi).
\label{eq2.9}
\end{equation}
\end{theorem}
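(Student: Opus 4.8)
The plan is to treat the four assertions of Theorem~\ref{thm6-1} essentially independently, reducing each to elementary manipulations of $\xi_1$, $\Gamma$ and $\zeta$. The functional equation is immediate from $\xi_1(u)=\xi_1(1-u)$ of~(\ref{eq2.1}): applying this with $u=2s$, with $u=2s-1$, and with $u=2s-1/2$ shows that the numerator $\xi_1(2s)+\xi_1(2s-1)$ of the representation~(\ref{eq2.3}) and the factor $\xi_1(2s-1/2)$ are each invariant under $s\mapsto1-s$, while ${\cal D}(s)\mapsto{\cal D}(1-s)$; hence $\Delta_6(1-s)=[{\cal D}(s)/{\cal D}(1-s)]\Delta_6(s)$, which is~(\ref{eq2.7})--(\ref{eq2.8}).

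\emph{Zeros and poles on the real axis.} I would read these off from the three factors in~(\ref{eq2.3}). The numerator is, up to normalisation, the function ${\cal T}_+$ of paper~III: its only poles are those of $\xi_1$ carried to $s=0,1/2,1$, the pair at $s=1/2$ cancelling, and by~III it has no real zeros. The factor $\xi_1(2s-1/2)$ has poles at $s=1/4,3/4$ and trivial zeros at $s=-3/4,-7/4,\dots$. And ${\cal D}(s)$ has a simple pole wherever $\Gamma(s)$ or $\Gamma(s-1/2)$ does — at $s=1/2$ and at every non-positive half-integer — and zeros at $s=1/4,-3/4,-7/4,\dots$ (from the poles of $\Gamma(s-1/4)$) together with the real roots of $\Gamma(s)/\Gamma(s-1/2)=-\sqrt{\pi}$. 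Combining, and cancelling coincident pole/zero pairs, one finds: the numerator pole at $s=1$ survives (the denominator is finite and non-zero there), giving the stated first-order pole of $\Delta_6$ at $\sigma=1$; the numerator pole at $s=0$ is absorbed by the pole of ${\cal D}$; the pole of $\zeta(2s-1/2)$ at $s=3/4$ forces the zero at $3/4$; and the zero at $s=1/2$ drops out of the functional equation itself, for ${\cal D}$ has a simple pole at $1/2$, whence ${\cal F}_6(1/2)=\lim_{\varepsilon\to0}{\cal D}(1/2+\varepsilon)/{\cal D}(1/2-\varepsilon)=-1$ and~(\ref{eq2.7}) reads $\Delta_6(1/2)=-\Delta_6(1/2)$. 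The uncancelled poles of ${\cal D}$ along the negative axis give the listed sequence of simple zeros of $\Delta_6$, while its zeros there give extra first-order poles; for $n\ge1$ a $\Gamma(s-1/4)$-pole and a transcendental root of $\Gamma(s)/\Gamma(s-1/2)=-\sqrt{\pi}$ both lie in $(-2n,-2n+1/2)$, producing the ``pairs''.

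\emph{Monotonicity on $(1,\infty)$.} Clearing the $\pi$- and $\Gamma$-prefactors gives, for real $\sigma>1/2$,
\[
\Delta_6(\sigma)=\frac{\zeta(2\sigma)+g(\sigma)\,\zeta(2\sigma-1)}{\bigl(1+g(\sigma)\bigr)\,\zeta(2\sigma-1/2)},\qquad g(\sigma)=\sqrt{\pi}\,\frac{\Gamma(\sigma-1/2)}{\Gamma(\sigma)}=\frac{1-{\cal A}(\sigma)}{{\cal A}(\sigma)},
\]
i.e. $\Delta_6=(1-w)R_1+wR_2$ with $w=g/(1+g)$, $R_1=\zeta(2\sigma)/\zeta(2\sigma-1/2)<1$, $R_2=\zeta(2\sigma-1)/\zeta(2\sigma-1/2)>1$. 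Since $\zeta$ decreases on $(1,\infty)$ and, by log-convexity of $\Gamma$, $g$ decreases from $g(1)=\pi$ to $0$, one reads off $\Delta_6(\sigma)\to+\infty$ as $\sigma\to1^+$ (the pole of $\zeta$ in $R_2$) and $\Delta_6(\sigma)\to1$ as $\sigma\to\infty$; the decrease itself follows by signing $\Delta_6'=w'(R_2-R_1)+(1-w)R_1'+wR_2'$, for which one must show the negative contribution $w'(R_2-R_1)+wR_2'$ dominates $(1-w)R_1'$ on $(1,\infty)$. This is the step I expect to be the real obstacle: $\Delta_6(\sigma)-1$ is exponentially small with a sign governed by the slowly varying quantity $w(\sigma)-(\sqrt{2}-1)$, so a rigorous argument must either bound $\Gamma'/\Gamma$ and $\zeta$ near $1$ sharply enough to sign $\Delta_6'(\sigma)$ uniformly, or else restrict the interval on which strict decrease is claimed; the exact count of the paired off-axis poles is the only other point needing more than a heuristic.

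\emph{Phase on $\sigma=1/2$.} There the numerator is $\xi_1(1+2it)+\xi_1(2it)=2\,\Re\,\xi_1(1+2it)$, hence real, by~(\ref{eq2.1}) and Schwarz reflection, and $\xi_1(2s-1/2)=\xi_1(1/2+2it)$ is real for the same reason, so $\arg\Delta_6(1/2+it)\equiv-\arg{\cal D}(1/2+it)$ modulo~$\pi$ — the residual $0$ or $\pi$ being exactly the phase jumps at the real zeros of those two factors. It remains to expand
\[
{\cal D}(1/2+it)=\frac{\Gamma(1/2+it)}{\Gamma(1/4+it)}\Bigl[\pi^{-1/4}+\pi^{1/4}\frac{\Gamma(it)}{\Gamma(1/2+it)}\Bigr]
\]
by Stirling: the first factor is $\sim(it)^{1/4}$, of argument $\sim\pi/8$, and $\pi^{1/4}\Gamma(it)/\Gamma(1/2+it)\sim\sqrt{\pi/t}\,e^{-i\pi/4}$ makes the bracket $1+\sqrt{\pi/(2t)}-i\sqrt{\pi/(2t)}$, of argument $\sim-\sqrt{\pi/(2t)}/\bigl(1+\sqrt{\pi/(2t)}\bigr)$; negating the sum of these reproduces~(\ref{eq2.9}).
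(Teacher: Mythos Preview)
Your treatment of the functional equation is exactly the paper's: both of you observe that the numerator of~(\ref{eq2.3}) and the factor $\xi_1(2s-1/2)$ are invariant under $s\mapsto1-s$, so the asymmetry is carried entirely by ${\cal D}$.

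For the real-axis zeros and poles you and the paper take mirror-image routes. You read them directly off the factors of~(\ref{eq2.3}), locating the negative-axis poles of $\Delta_6$ as the zeros of ${\cal D}$ (poles of $\Gamma(s-1/4)$ together with real roots of $\Gamma(s)/\Gamma(s-1/2)=-\sqrt{\pi}$). The paper instead pushes everything through the functional equation, rewriting ${\cal F}_6$ in the form~(\ref{eq2.10}) and identifying the same poles as poles of $\Gamma(3/4-s)$ and roots of~(\ref{eq2.11}). These are the same objects viewed from opposite sides of $s\leftrightarrow1-s$; your direct reading of ${\cal D}$ is arguably more transparent, and your ${\cal F}_6(1/2)=-1$ argument for the zero at $s=1/2$ is neater than the paper's explicit Taylor expansion~(\ref{exteq1}). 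One correction, though: $\xi_1(u)=\pi^{-u/2}\Gamma(u/2)\zeta(u)$ has \emph{no} trivial zeros---the poles of $\Gamma(u/2)$ at the negative even integers exactly cancel the trivial zeros of $\zeta$---so your ``trivial zeros of $\xi_1(2s-1/2)$ at $s=-3/4,-7/4,\dots$'' do not exist. This does not damage your conclusions, since the negative-axis poles of $\Delta_6$ come entirely from the zeros of ${\cal D}$ that you already catalogued, but the bookkeeping should drop that clause.

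For the phase on the critical line your route is more direct than the paper's and avoids a sign subtlety. You note that the numerator equals $2\Re\,\xi_1(1+2it)$ and that $\xi_1(1/2+2it)$ is real, so $\arg\Delta_6\equiv-\arg{\cal D}$ modulo~$\pi$, then expand ${\cal D}$ by Stirling. The paper instead takes arguments in~(\ref{eq2.7}) to obtain~(\ref{eq2.12})--(\ref{eq2.13}) and then applies Stirling; both arrive at~(\ref{eq2.9}), but your realness argument makes the modulo-$\pi$ ambiguity explicit from the start.

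On monotonicity for $\sigma>1$ you are right to flag the signing of $\Delta_6'$ as the real obstacle. The paper in fact offers no argument for this assertion at all: it is stated in the theorem and supported by Fig.~\ref{fig1}, but the proof passes over it entirely. Your partial analysis therefore already goes further than the paper does.
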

\begin{proof}
The functional equation for $\Delta_6(s)$ follows readily from equation (\ref{eq2.3}), which has been written in the form of a numerator which is symmetric under interchange of $s$ and $1-s$, and the first factor in  the denominator which has the same symmetry. We can rewrite the expression (\ref{eq2.8}) using equations (\ref{eq2.3}) and (\ref{eq2.5}):
\begin{eqnarray}
{\cal F}_6(s)&=& \frac{\Gamma(s) \Gamma(3/4-s) {\cal A}(1-s)}{\Gamma(1-s) \Gamma(s-1/4) {\cal A}(s)} \nonumber \\
                     &=& \frac{\Gamma(s) \Gamma(3/4-s)}{\Gamma(1-s) \Gamma(s-1/4) {\cal A}(s)\left[1+\tan(\pi s)\left(\frac{1}{{\cal A}((s+1/2)}-1\right)\right]}.
\label{eq2.10}
\end{eqnarray}

The single pole to the right of $\sigma=1/2$ is that of the  function $\xi_1(2 s-1)$ in the numerator of (\ref{eq2.2}). Poles and zeros to the left of $\sigma=1/2$ then follow from the form (\ref{eq2.10}) for 
${\cal F}_6(s)$: the zeros are at the poles in the numerator of $\Gamma(1-s)$ and of $\tan (\pi s)$, while half the poles are due to the factor $\Gamma (3/4-s)$ in the numerator, the other half being given by the roots of the equation
\begin{equation}
\tan(\pi s)\left(\frac{1}{{\cal A}((s+1/2)}-1\right)=-1.
\label{eq2.11}
\end{equation}

The first-order expansion of $\Delta_6(s)$ near its zero at $s=1/2$ is:
\begin{equation}
\Delta_6(1/2+ \delta s)=\frac{(3\gamma-2\log(2\pi)+\psi(1/2))}{2\zeta(1/2)}\delta s+O[(\delta s)^2].
\label{exteq1}
\end{equation}
Similarly, near the zero at $s=3/4$,
\begin{equation}
\Delta_6(3/4+ \delta s)=\frac{2(\sqrt{\pi}\Gamma(1/4)\zeta(1/2)+\Gamma(3/4)\zeta(3/2))}{\sqrt{\pi}\Gamma(1/4)+\Gamma(3/4)}\delta s+O[(\delta s)^2].
\label{exteq2}
\end{equation}

On the critical line, from (\ref{eq2.7}) and  (\ref{eq2.8}) (modulo $\pi$)  :
\begin{equation}
\arg \Delta_6(1/2+i t)=\frac{1}{2} \arg {\cal F}_6(1/2+i t)=\arg {\cal D}(1/2+i t).
\label{eq2.12}
\end{equation}
Using (\ref{eq2.4}), this gives
\begin{equation}
\arg \Delta_6(1/2+i t)=\arg\Gamma (1/2+i t)-\arg\Gamma(1/4+i t)-\arg{\cal A}(1/2+i t).
\label{eq2.13}
\end{equation}
Using Stirling's formula in (\ref{eq2.13}) yields (\ref{eq2.9}).
 \end{proof}
 
 Graphical examples of the behaviour discussed in Theorem 2.2 can be found in Fig. \ref{fig1}.

%%%%%%%%%%%%%%%%%%%%%%%%%%%%%%%%%%%%%%%%%%%%%%%%%%%%%%%%%%%%%%%%%%%%%
\begin{figure}[h]
\includegraphics[width=3.0in]{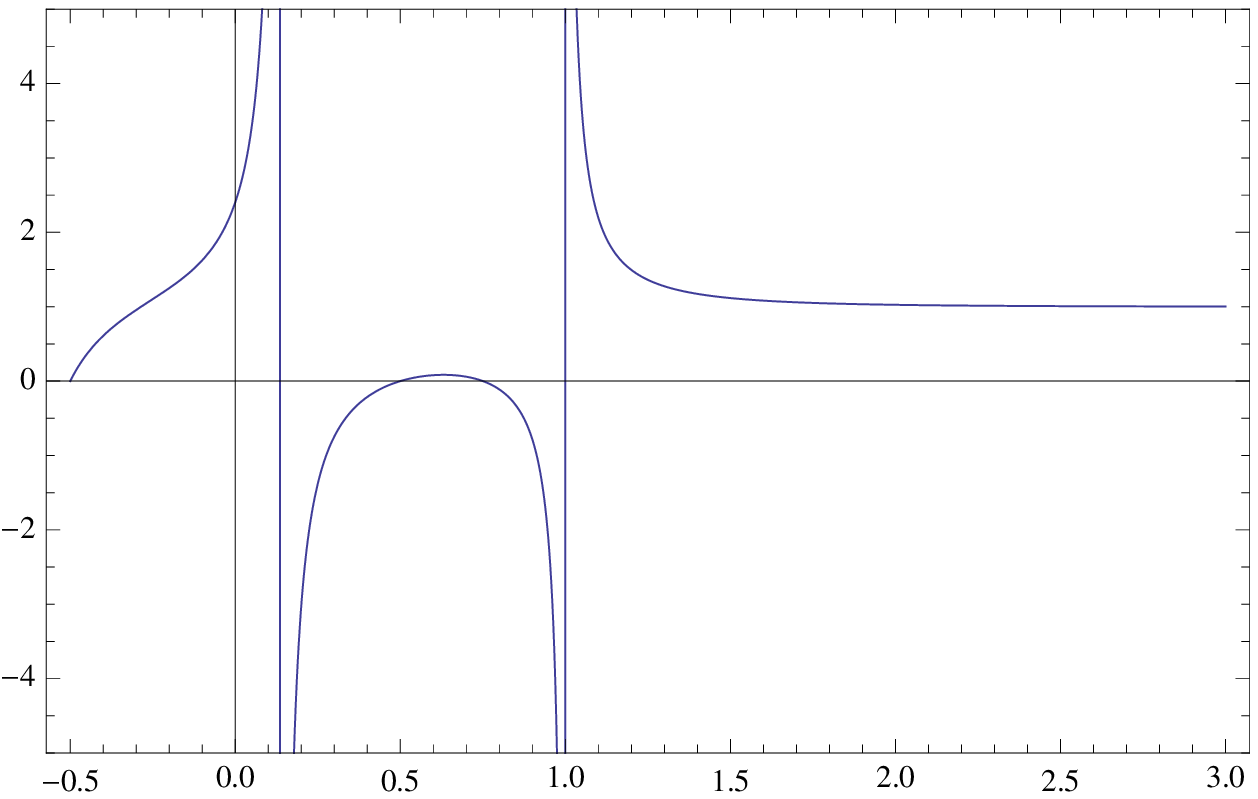}~~\includegraphics[width=3.0in]{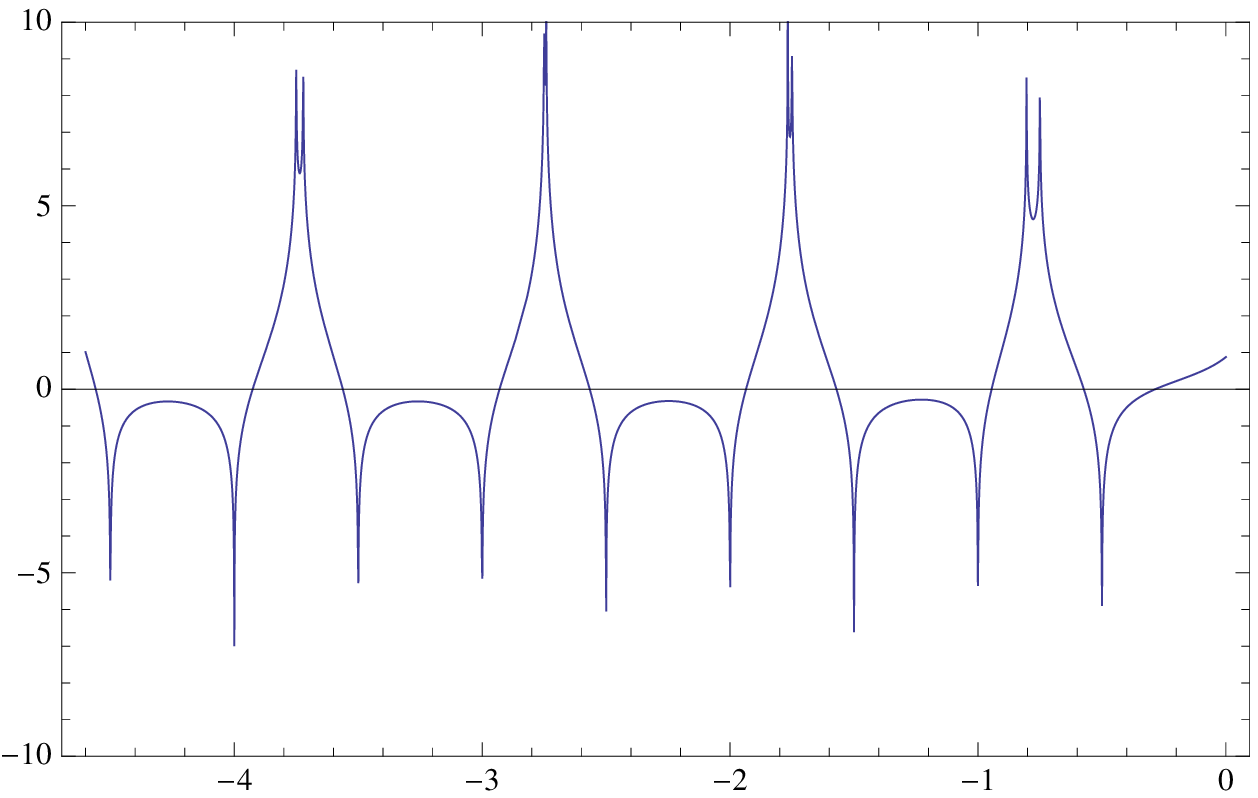}\\
\includegraphics[width=3.0in]{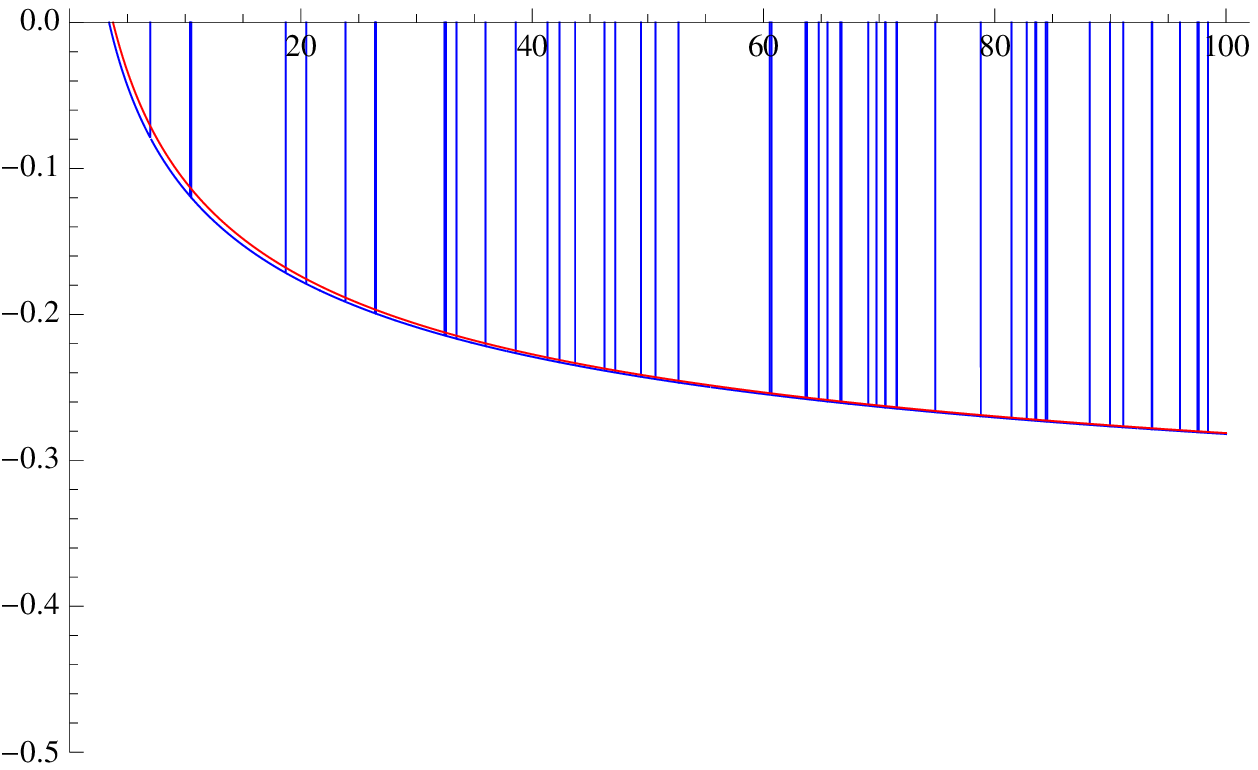}
\caption{Top:Plots of  $\Delta_6(\sigma)$ along the real axis: at left, a linear plot for $\sigma>0$ and at right a logarithmic plot of its modulus for $\sigma<0$. Bottom: a plot of 
$\arg \Delta_6 (1/2+i t)$ (blue line, with phase jumps at zeros and poles; red line, the approximation (\ref{eq2.9}). }
\label{fig1}
 \end{figure}
%%%%%%%%%%%%%%%%%%%%%%%%%%%%%%%%%%%%%%%%%%%%%%%%%%%%%%%%%%%%%%%%%%%%%

%%%%%%%%%%%%%%%%%%%%%%%%%%%%%%%%%%%%%%%%%%%%%%%%%%%%%%%%%%%%%%%%%%%%%
\begin{figure}[h]
\includegraphics[width=3.0in]{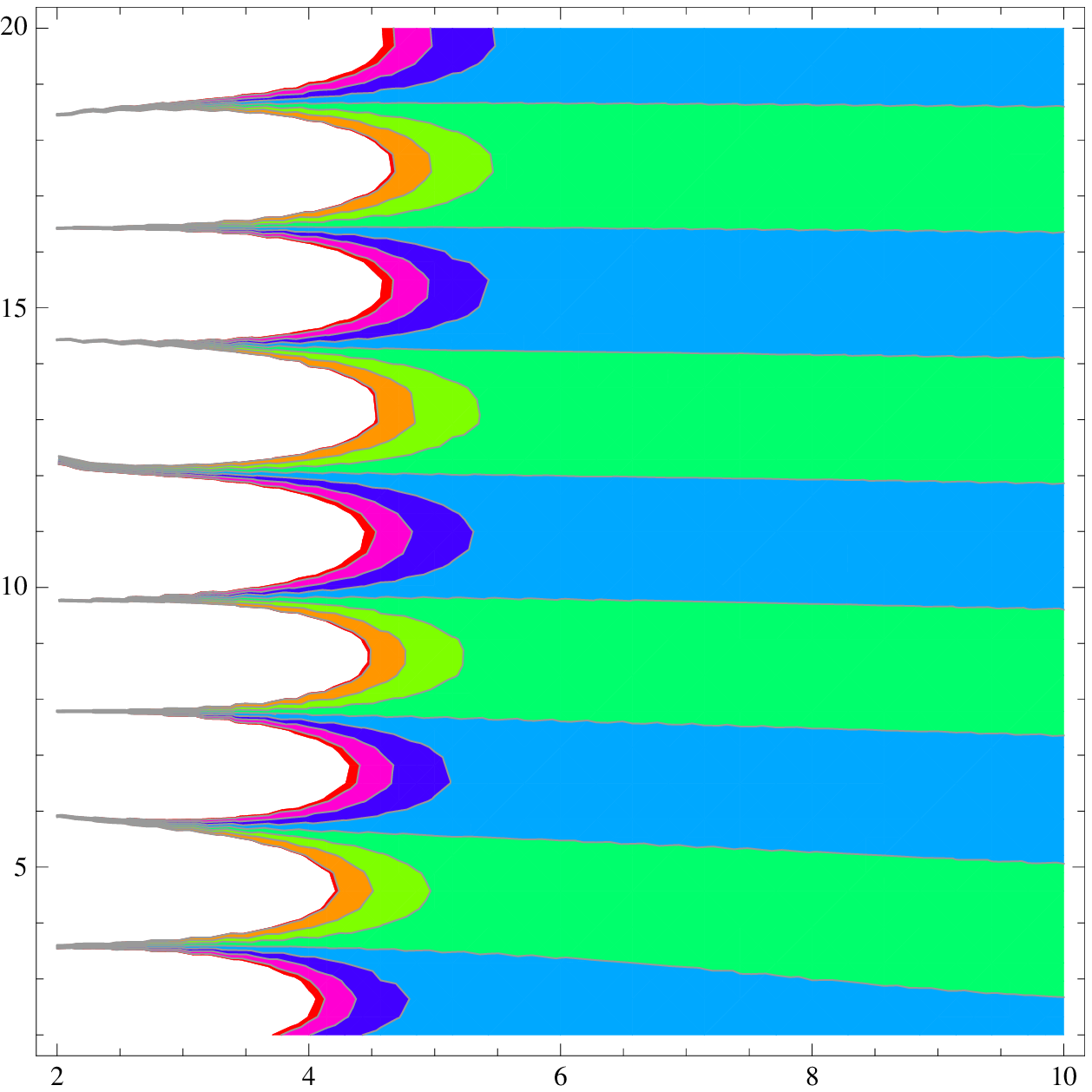}~~\includegraphics[width=3.0in]{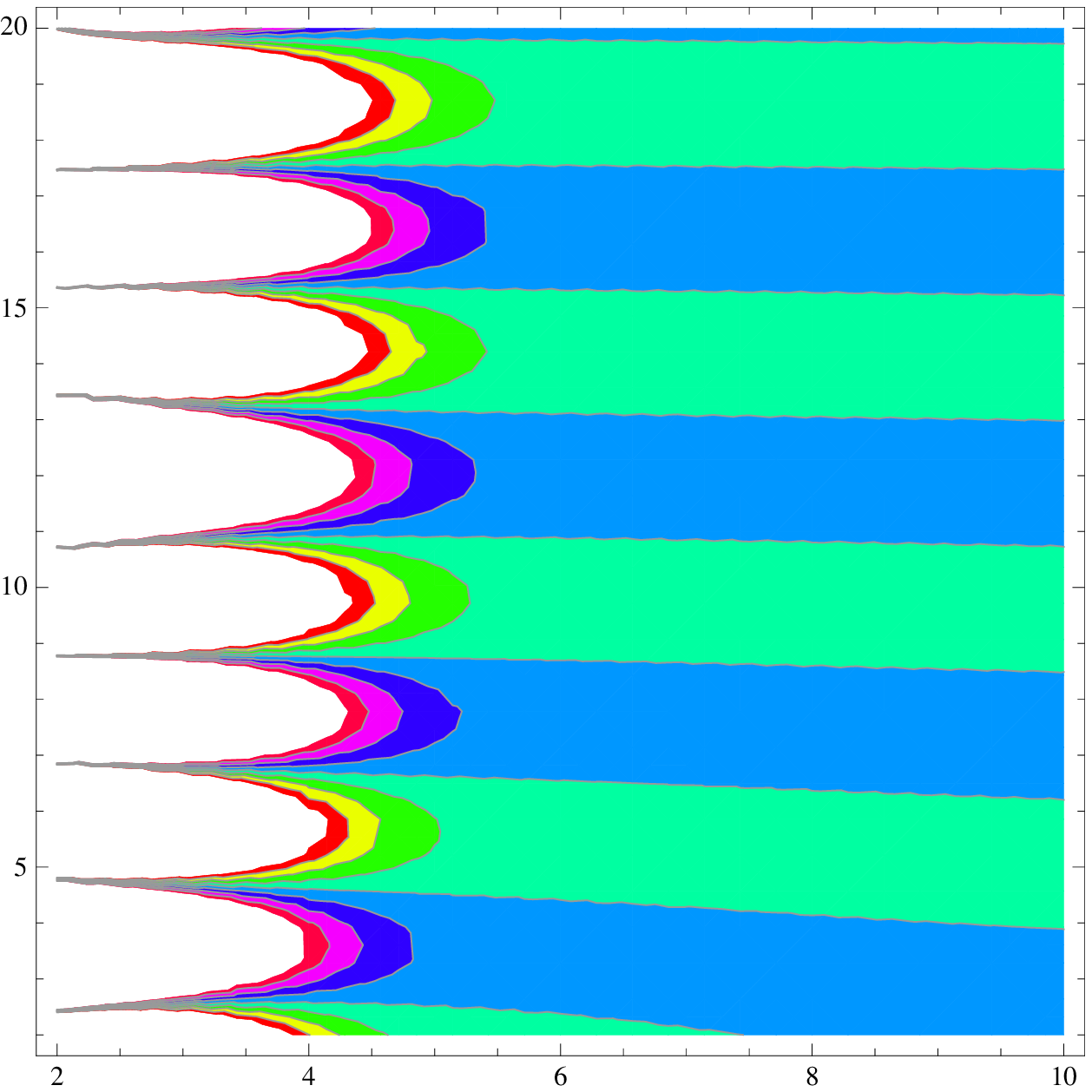}
\caption{ Contour plots of the phase of $\Delta_6(s)$ (left) and its amplitude (right).The regions corresponding to the green colour for $\sigma$ large correspond to negative arguments (left) and amplitudes
less than unity (right), while the blue colour regions indicate positive arguments (left) and amplitudes in excess of unity (right).}
\label{fig2}
 \end{figure}
%%%%%%%%%%%%%%%%%%%%%%%%%%%%%%%%%%%%%%%%%%%%%%%%%%%%%%%%%%%%%%%%%%%%%

In Fig. \ref{fig2}, phase and amplitude contour plots are given for $\Delta_6(s)$ in $t>0$ and $\sigma>2$. The phase contours shown correspond to values near zero, with the lines corresponding to phase zero separating light green regions from light blue regions. The similar, but vertically-shifted, contours of constant amplitude at right have amplitude unity at the lines separating light green regions from light blue regions. These contours are the subject of the next theorem. 

\begin{theorem}
The only lines of constant phase of $\Delta_6(s)$ which can attain $\sigma=\infty$ are
equally spaced, and have interspersed lines of constant modulus. All such lines of constant phase
which reach the critical line do so at a pole or zero of  $\Delta_6(s)$.
\label{thm2-2}
\end{theorem}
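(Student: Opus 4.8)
The key input is the \emph{balanced quotient} property, which I would first establish in sharp form. Clearing the common factor $\pi^{-s}$ in (\ref{eq2.2}) gives
\[
\Delta_6(s)=\frac{\Gamma(s)\,\zeta(2s)+\sqrt\pi\,\Gamma(s-1/2)\,\zeta(2s-1)}{\big(\Gamma(s)+\sqrt\pi\,\Gamma(s-1/2)\big)\,\zeta(2s-1/2)},
\]
so on inserting $\zeta(w)=1+2^{-w}+3^{-w}+\cdots$ the ``$1$'' parts of numerator and denominator cancel identically; only the corrections from $\zeta(w)-1$ remain, and they are exponentially small in $\sigma$:
\begin{equation}
\Delta_6(s)=1+(1-\sqrt2)\,2^{-2s}+O\!\left(s^{-1/2}\,2^{-2s}\right)+O\!\left(3^{-2s}\right)\qquad(\sigma\to+\infty),
\label{pp1}
\end{equation}
the $\Gamma$-ratios being controlled by Stirling's formula exactly as in Theorem \ref{thm6-1}. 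Thus $\Delta_6(s)\to1$, so a line of constant phase can run out to $\sigma=\infty$ only if that phase equals $0$; and to leading order $\arg\Delta_6(\sigma+it)=(\sqrt2-1)\,2^{-2\sigma}\sin(2t\log2)+o(2^{-2\sigma})$ while $\log|\Delta_6(\sigma+it)|=(1-\sqrt2)\,2^{-2\sigma}\cos(2t\log2)+o(2^{-2\sigma})$.

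Since the $2^{-2s}$ term in (\ref{pp1}) strictly dominates every remaining contribution, I would then use the implicit function theorem: near each ordinate $t_m=m\pi/(2\log2)$ the map $t\mapsto\arg\Delta_6(\sigma+it)$ has a simple zero for every sufficiently large $\sigma$, and these zeros trace a smooth phase-zero curve that continues out to $\sigma=\infty$ with ordinate $\to t_m$; by the same argument $|\Delta_6|=1$ along curves issuing from $t=(2m+1)\pi/(4\log2)$, and these are the only constant-phase and constant-modulus lines attaining $\sigma=\infty$. This gives the infinite family of phase-zero lines, with common ordinate spacing $\pi/(2\log2)$ as $\sigma\to\infty$, and the unit-modulus lines sitting exactly midway between consecutive members — the first assertion of the theorem.

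For the second assertion I would invoke the functional equation. By (\ref{eq2.7})--(\ref{eq2.8}) and (\ref{eq2.12}) the phase on the critical line is $\arg\Delta_6(1/2+it)\equiv\arg{\cal D}(1/2+it)\ (\mathrm{mod}\ \pi)$, and by (\ref{eq2.13}) and Stirling this is the quantity (\ref{eq2.9}); in particular, reduced modulo $\pi$, it stays in a fixed neighbourhood of $-\pi/8$ and is never $\equiv 0\ (\mathrm{mod}\ \pi)$ at a point where $\Delta_6$ is finite and non-zero, the few low-lying ordinates where the crude estimate (\ref{eq2.9}) is inadequate being settled from the exact form (\ref{eq2.13}). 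Now suppose a phase-zero line from $\sigma=\infty$ meets the critical line at $p=1/2+it_0$. Along that line $\arg\Delta_6$ is a continuous branch identically equal to $0$, so $\arg\Delta_6(p)=0$ by continuity; were $p$ neither a zero nor a pole of $\Delta_6$ this would contradict the previous sentence. Hence $p$ is a zero or a pole of $\Delta_6$, and at such a point the local expansion $\Delta_6(s)\sim c\,(s-p)^{\pm 1}$ (of the type (\ref{exteq1})--(\ref{exteq2})) does carry a phase-zero ray into $\sigma>1/2$, so the picture is consistent.

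The hard part is the global step behind the first assertion. The expansion (\ref{pp1}) controls the phase-zero and unit-modulus contours only for $\sigma$ large; to know that they genuinely ``run towards the critical line'' one must rule out $\Im\log\Delta_6$ developing spurious critical points in the open strip $1/2<\sigma<\infty$ — equivalently, that neighbouring phase-zero lines neither pinch together nor sprout extra branches before reaching $\sigma=1/2$. A uniform estimate for $\Delta_6$ across that whole strip is precisely the delicate point, and — consistently with the declared aims of the paper — I would supplement the analytic argument with the zero-net-phase-change contour reasoning already sketched in this section (a box bounded by two phase-zero lines and two lines $\sigma=\mathrm{const}$ can enclose no pole of $\zeta(2s-1/2)$ without also enclosing a zero of ${\cal T}_+(s)$) and with the numerical evidence of Figs.~\ref{fig1}--\ref{fig2}.
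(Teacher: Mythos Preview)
Your argument is essentially the paper's own: derive the balanced-quotient expansion $\Delta_6(s)=1+(1-\sqrt2)2^{-2s}+\cdots$, read off from the dominant $2^{-2s}$ term that the only constant-phase lines reaching $\sigma=\infty$ are phase-zero lines with asymptotic spacing $\pi/\ln 4$ and unit-modulus lines midway between them, and then use (\ref{eq2.9}) to conclude that a phase-zero line can meet $\sigma=1/2$ only at a zero or pole of $\Delta_6$. The paper keeps the $\sqrt{\pi/(s-1/2)}$ correction explicitly in the coefficient of $2^{-2s}$ (its (\ref{eq2.16})--(\ref{eq2.18})) whereas you absorb it into the $O(s^{-1/2}2^{-2s})$ error, and it appeals to numerics for small $t$ where you invoke the implicit function theorem and the exact form (\ref{eq2.13}); but these are presentational rather than substantive differences, and the paper likewise defers the global continuation question you flag as the ``hard part'' to its next theorem.
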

\begin{proof}
The proposition is true for $t$ not large compared with unity on the basis of numerical evidence
(see Fig. \ref{fig2}).

The leading terms in equation (\ref{eq2.4}) come from the expansion for ${\cal A}(s)$ (see (\ref{eq2.6}))and  the resulting expansion for $1-{\cal A}(s)$:
\begin{equation}
1-{\cal A}(s)=\frac{\sqrt{\frac{\pi}{s-1/2}}\left(1+\frac{1}{8(s-1/2)}+\frac{1}{128(s-1/2)^2}-\frac{5}{1024(s-1/2)^3}+O\left(\frac{1}{(s-1/2)^4}\right)\right)}{\left[1+\sqrt{\frac{\pi}{s-1/2}}\left(1+\frac{1}{8(s-1/2)}+\frac{1}{128(s-1/2)^2}-\frac{5}{1024(s-1/2)^3}+O\left(\frac{1}{(s-1/2)^4}\right)\right)\right]}.
\label{eq2.14}
\end{equation}
These are combined with the series valid in $\sigma>1$ for $\zeta(s)$:
\begin{equation}
\zeta(s)=1+\frac{1}{2^s}+\frac{1}{3^s}+\ldots ,
\label{eq2.15}
\end{equation}
adapted to the three different arguments present in equation (\ref{eq2.4}). The result is
\begin{equation}
\Delta_6(s)=1+\frac{1}{2^{2s}}\left(1-\sqrt{2}+\sqrt{\frac{\pi}{s-1/2}}\right)+\frac{1}{3^{2s}}\left(1-\sqrt{3}+2\sqrt{\frac{\pi}{s-1/2}}\right)+\ldots.
\label{eq2.16}
\end{equation}

From (\ref{eq2.16}) the leading terms in the expansion of 
$\Delta_6(s)$ are
\begin{eqnarray}
\Delta_6(\sigma+i t) &\sim& 1+e^{-\sigma \ln 4}\left\{   \left( 1-\sqrt{2}+\sqrt{\frac{\pi}{2 t}}\right) \cos (t \ln 4)-\sqrt{\frac{\pi}{2 t}} \sin (t\ln 4)\right. \nonumber \\
 & &\left. -i\left[ \sqrt{\frac{\pi}{2 t}} \cos (t \ln 4)+ \sin (t\ln 4)\left(1-\sqrt{2}+\sqrt{\frac{\pi}{2 t}}\right)\right]\right\}
\label{eq2.17}
\end{eqnarray}
Thus, the lines of constant  phase zero of $\Delta_6(\sigma+i t)$ are to leading order, if $t>>\sigma$ and $\sigma>2$, given by
\begin{equation}
\tan (t\ln 4)=\frac{\sqrt{\frac{\pi}{2 t}}}{\sqrt{2}-1-\sqrt{\frac{\pi}{2 t}}}.
\label{eq2.18}
\end{equation}
These occur at values tending  to $n\pi/\ln4$ for integer $n$ as $t$ increases. The lines of amplitude unity of  $\Delta_6(\sigma+i t)$ are to leading order given by
\begin{equation}
\cos (t\ln 4)=0, ~~t=(n+\frac{1}{2})/\ln 4.
\label{eq2.19}
\end{equation}
The lines of constant amplitude unity  lie halfway between the lines of constant phase zero, to leading order.

The lines of constant phase which reach $\sigma=\infty$ are from (\ref{eq2.17}) lines of phase zero. From equation  (\ref{eq2.9}), these can only intersect the critical line at a pole or zero of $\Delta_6(s)$.
The consideration as to whether all such lines must intersect the critical line is left to the next theorem.
\end{proof}

%%%%%%%%%%%%%%%%%%%%%%%%%%%%%%%%%%%%%%%%%%%%%%%%%%%%%%%%%%%%%%%%%%%%%
\begin{figure}[h]
\includegraphics[width=3.0in]{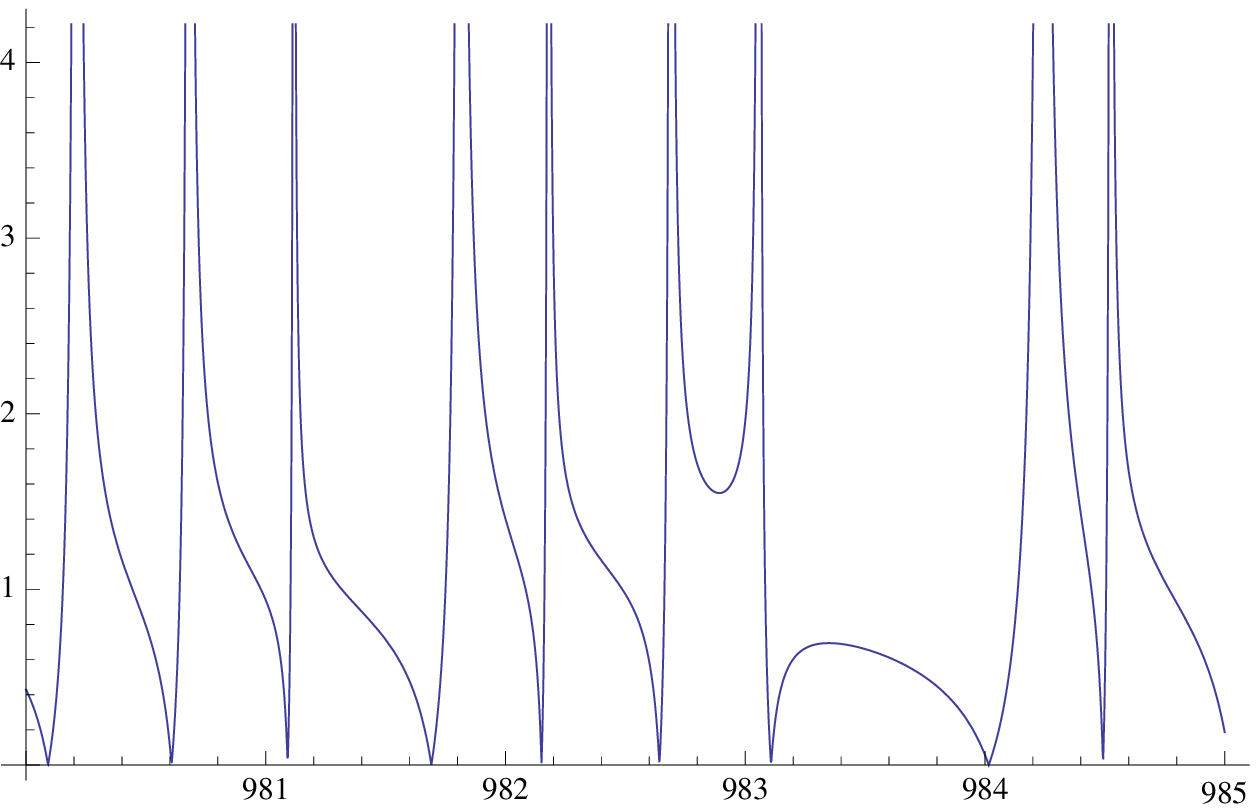}~~\includegraphics[width=3.0in]{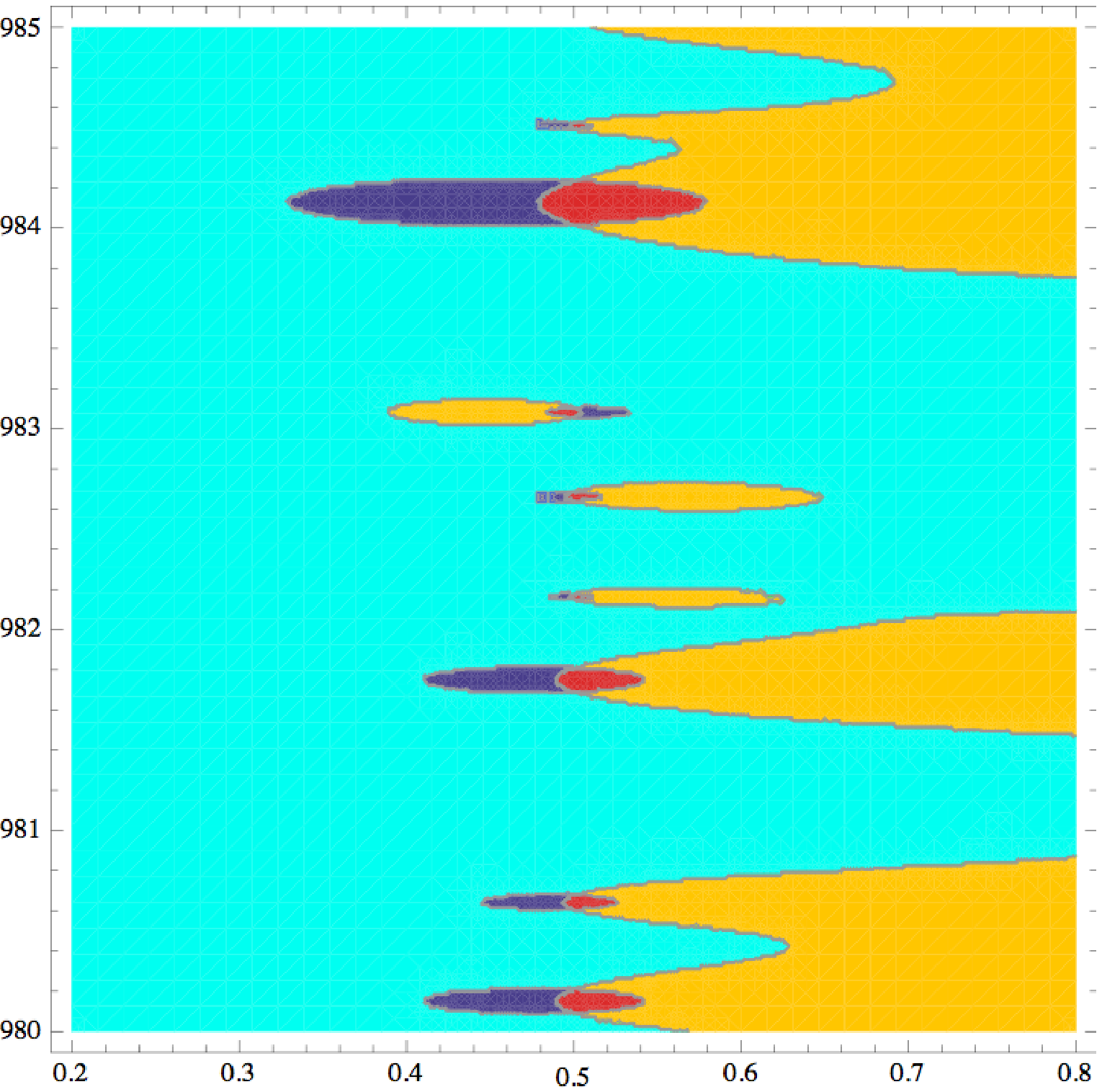}
\caption{ (Left) Plot of the modulus of the amplitude of $\Delta_6(1/2+i t)$ for $t$ between 980 and 985. (Right) A quadrant plot of the phase of $\Delta_6(\sigma +i t)$ for $t$ in the same range and for $\sigma$
lying between 0.2 and 0.8. Phase is indicated by colour, with yellow denoting the first quadrant, red the second, purple the third and light blue the fourth.}
\label{fig3}
 \end{figure}
%%%%%%%%%%%%%%%%%%%%%%%%%%%%%%%%%%%%%%%%%%%%%%%%%%%%%%%%%%%%%%%
%%%%%%%%%%%%%%%%%%%%%%%%%%%%%%%%%%%%%%%%%%%%%%%%%%%%%%%%%%%%%%%%%%%%%
\begin{figure}[h]
\includegraphics[width=3.0in]{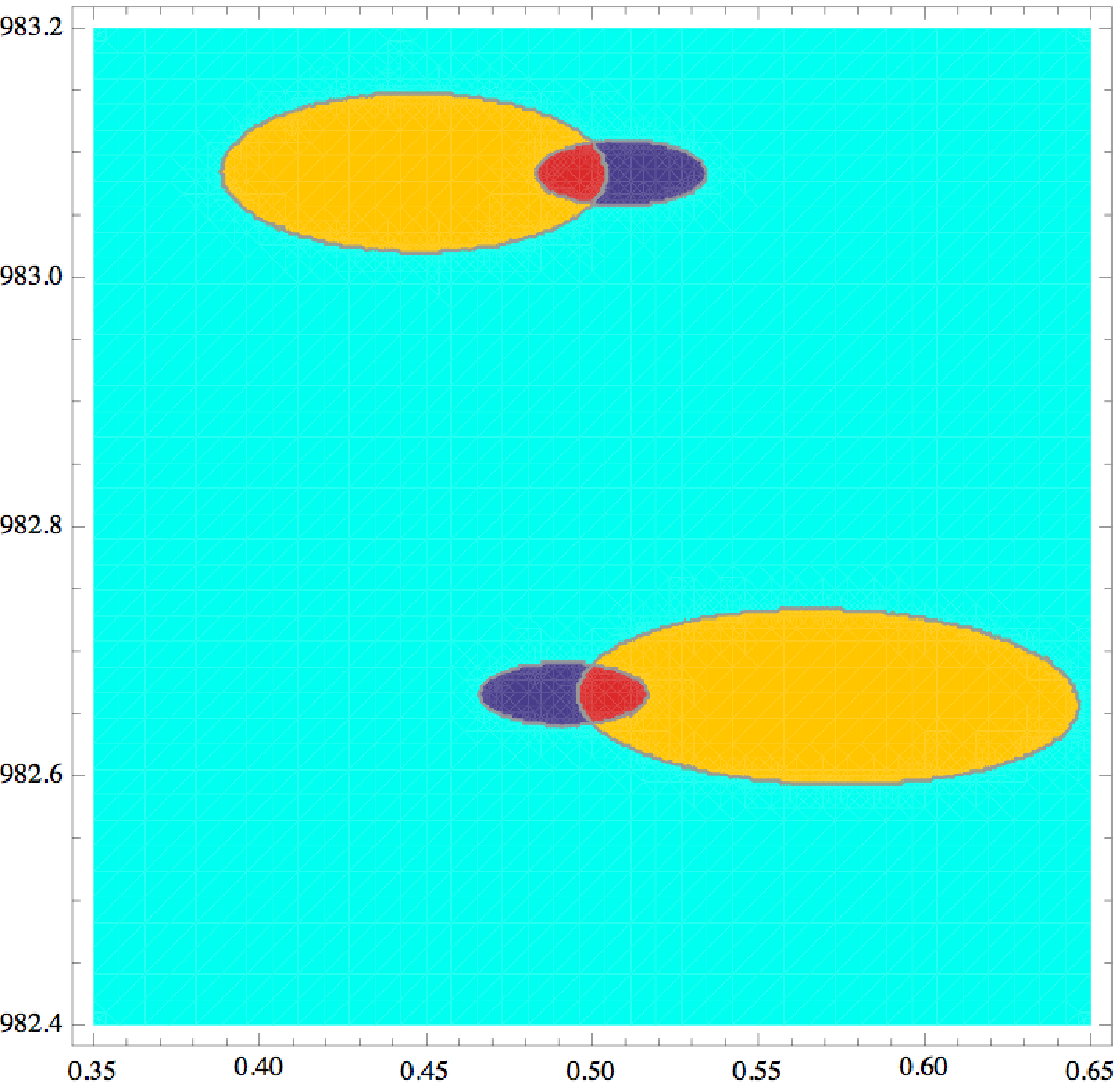}~~\includegraphics[width=3.0in]{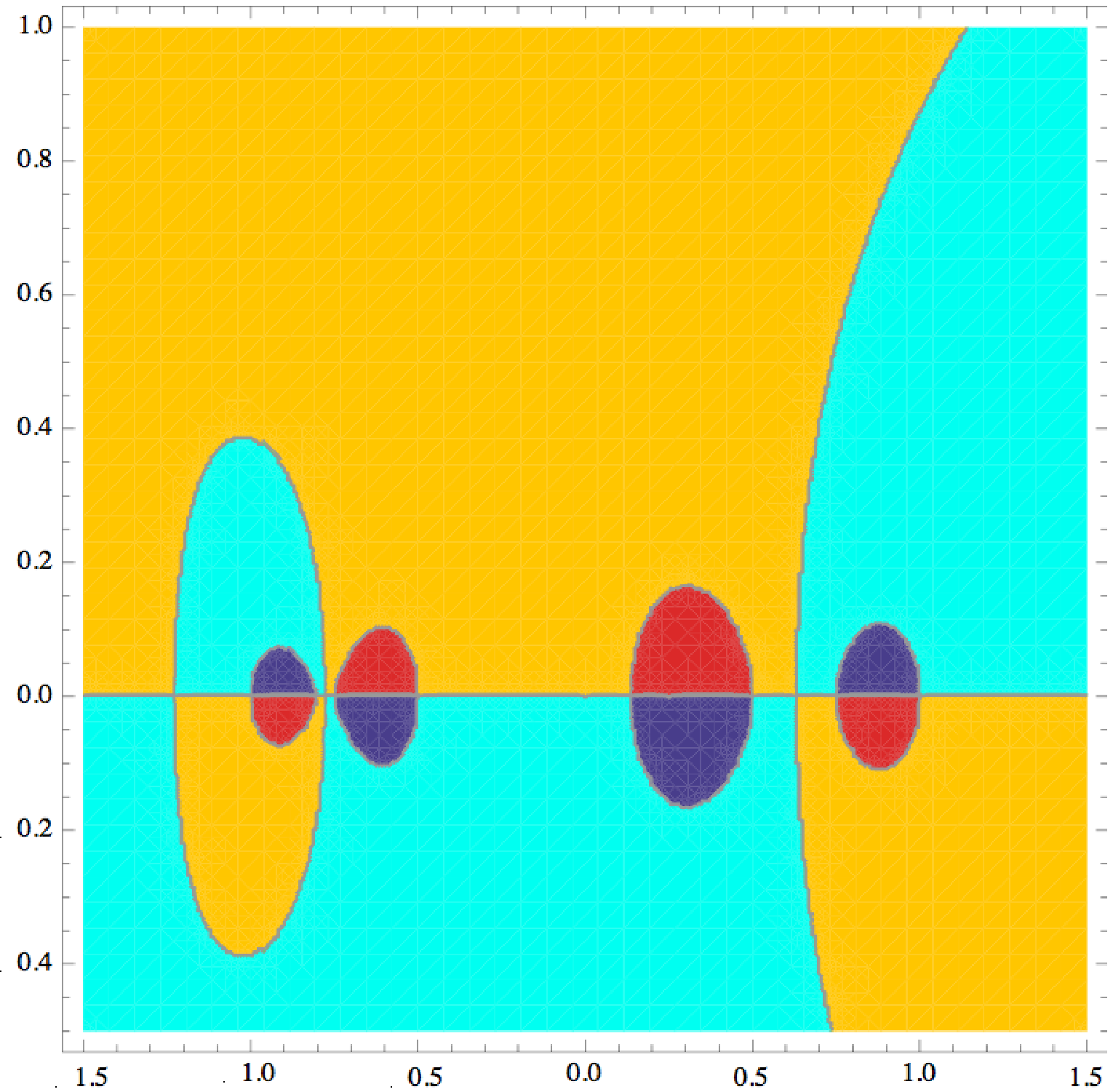}
\caption{ (Left) Detail from Fig. \ref{fig3} showing phase distributions  of $\Delta_6(s)$with the sequence of zeros (Z) and poles (P) being (from bottom to top) ZPPZ. 
(Right) Quadrant phase plots for $\Delta_6(s)$ in the neighbourhood of the real axis.}
\label{fig3ext}
 \end{figure}
%%%%%%%%%%%%%%%%%%%%%%%%%%%%%%%%%%%%%%%%%%%%%%%%%%%%%%%%%%%%%%%
Plots of amplitude and phase of  $\Delta_6(s)$ are given in Fig. \ref{fig3}. The range chosen for the amplitude plot along the critical line shows both typical behaviour of zero succeeded by pole, and atypical behaviour of two poles occurring in succession, followed by two zeros in succession. The amplitudes range around unity, unlike the strong exponential behaviour of functions like $\xi_1(1/2+i t)$, as a consequence of the balanced definition (\ref{eq2.2}). The phase plot in Fig. \ref{fig3} shows a single connected region to the left of $\sigma=0.3$, corresponding to the fourth quadrant. The first quadrant regions break up into "islands" in the region where two successive poles occur, followed by two successive zeros. 

More detail of the phase plot of  $\Delta_6(s)$ in the "island" region is given at left in Fig. \ref{fig3ext}. At right, quadrant phase plots are given for the region near the real axis. Running from left to right, the sequence of zeros and poles is: ZPPZPZZP. Two zeros of the derivative of the amplitude function occur splitting the PP pair, and the ZZ pair.

A broader view of the differences in the behaviour of $\Delta_6(s)$ distinctly to the left of the critical line and distinctly to its right is afforded by a comparison of Figs. \ref{fig2} and \ref{fig4}. These show amplitude and phase in two regions connected by a reflection in the critical line. They are connected by an application of the functional equation (\ref{eq2.7}) and a sign reversal of the phase (i.e., a complex conjugation). The strong differences in the morphology of phase and amplitude contours is dictated by the behaviour of the factor ${\cal F}_6 (s)$ in  (\ref{eq2.7}).

%%%%%%%%%%%%%%%%%%%%%%%%%%%%%%%%%%%%%%%%%%%%%%%%%%%%%%%%%%%%%%%%%%%%%
\begin{figure}[h]
\includegraphics[width=3.0in]{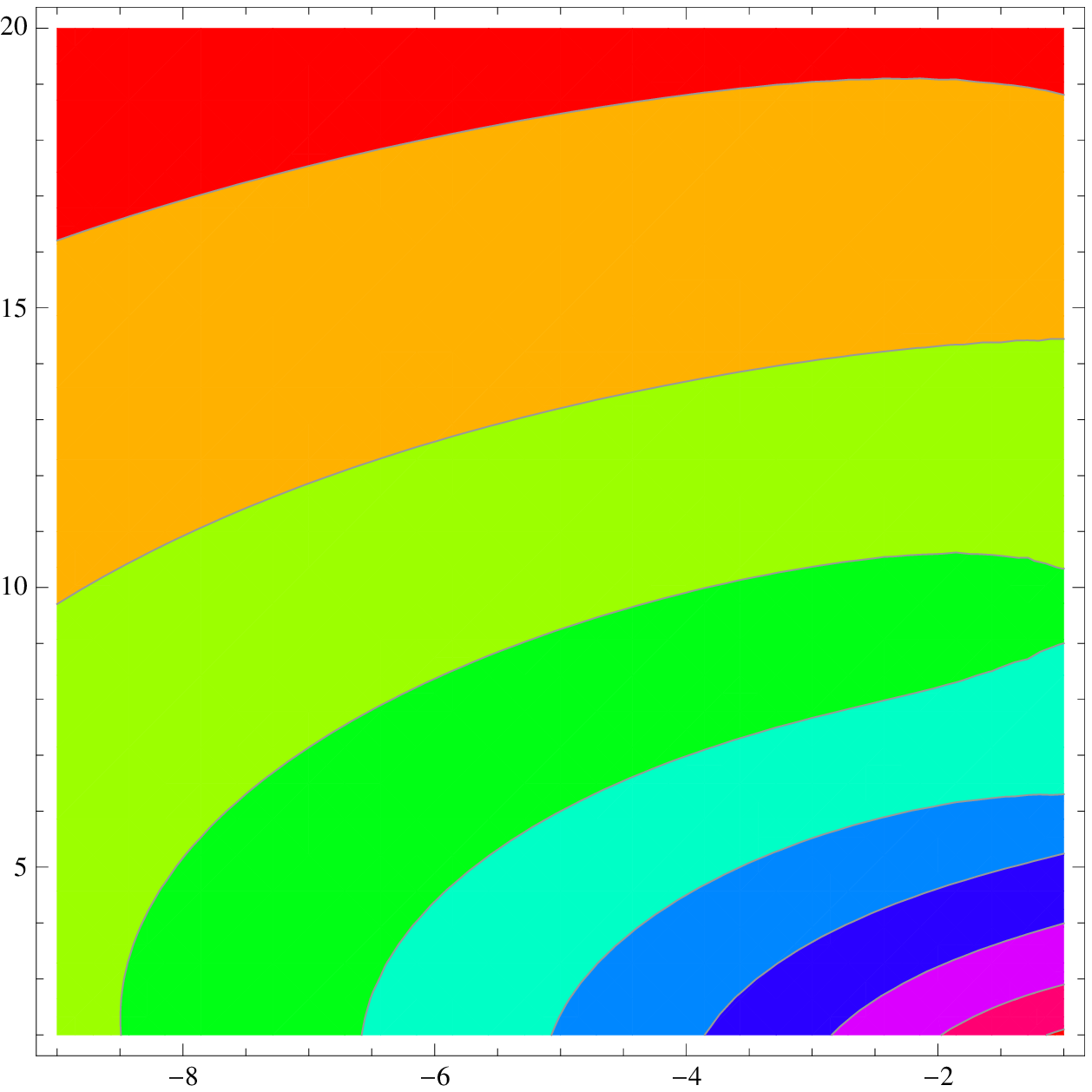}~~\includegraphics[width=3.0in]{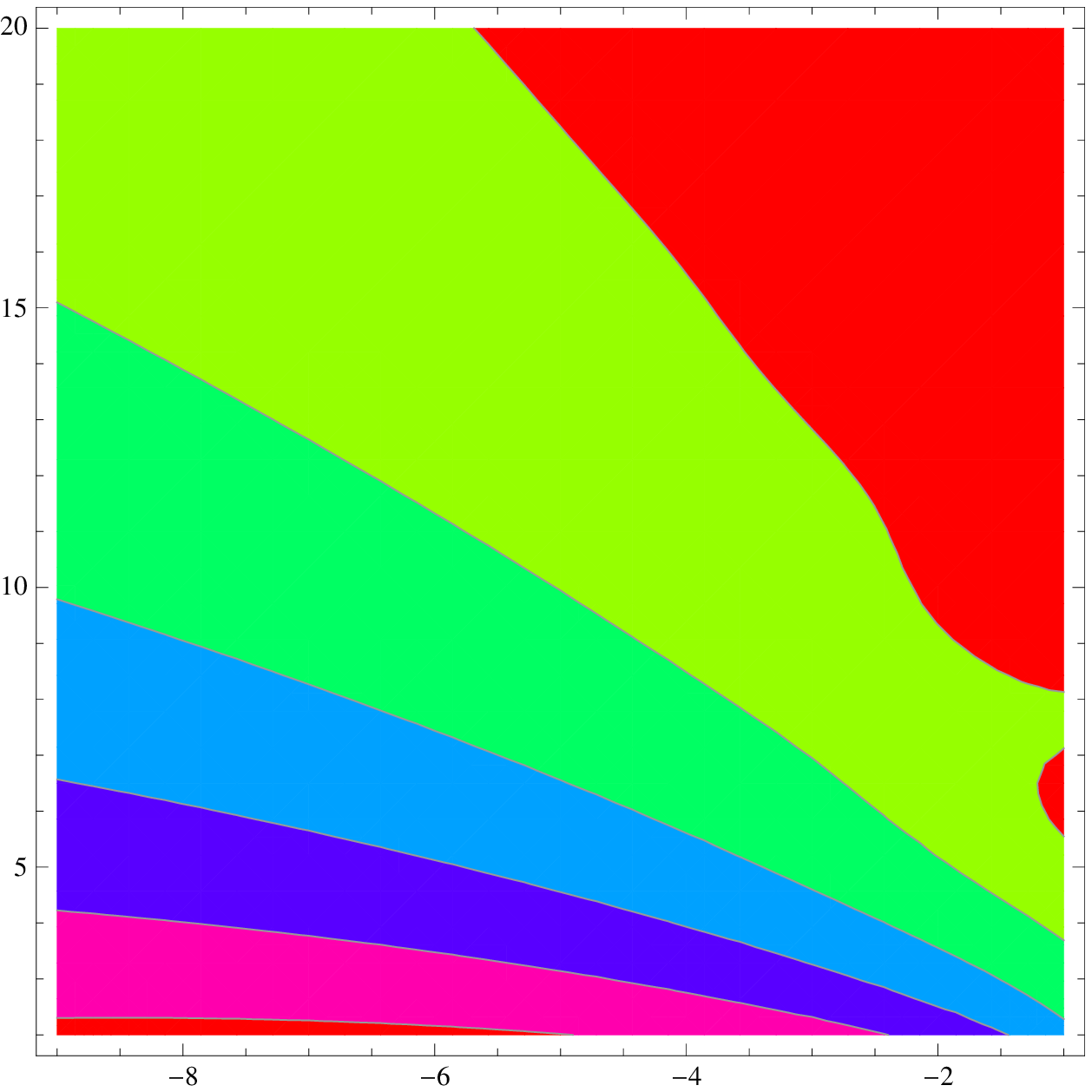}
\caption{ Contour plots of the phase of $\Delta_6(s)$ (left) and its amplitude (right).The phase contours  correspond to values decreasing from 0 (bottom right)  to -0.35 (top left) in steps of 0.05. The amplitude contours correspond to values decreasing from 1.3 (bottom left)  to 1.05 (top right) in steps of 0.05.}
\label{fig4}
 \end{figure}
%%%%%%%%%%%%%%%%%%%%%%%%%%%%%%%%%%%%%%%%%%%%%%%%%%%%%%%%%%%%%%%%%%%%%

\begin{theorem} For large $|s|$ the function ${\cal F}_6(s)$ has the asymptotic expansion
\begin{equation}
{\cal F}_6(s)\sim\frac{\sqrt{2}}{1+\cot(\pi s)} \left( 1-\frac{1}{16 s}-\frac{15}{512 s^2}-\frac{75}{8192 s^3}+\ldots\right)\frac{1+{\cal T}_D(s)}{1+\tan (\pi s) {\cal T}_D(s+1/2)}
\label{eq2.20}
\end{equation}
where
\begin{equation}
{\cal T}_D(s)=\sqrt{\frac{\pi}{s-1/2}}\left(1+\frac{1}{8(s-1/2)}+\frac{1}{128(s-1/2)^2}-\frac{5}{1024(s-1/2)^3}+O\left(\frac{1}{(s-1/2)^4}\right)\right).
\label{eq2.21}
\end{equation}
\label{thm2.3}
In consequence, in $\sigma<-2$, $t>2$, the absolute value of $\Delta_6(s)$ exceeds unity, and its phase lies in the fourth quadrant. Also, all lines of  phase zero  of $\Delta_6(s)$ coming from $\sigma=\infty$ intersect the critical line.
\end{theorem}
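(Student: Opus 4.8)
The plan is to establish the three assertions in order, deducing the two stated consequences from the asymptotic expansion (\ref{eq2.20}).

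\emph{The expansion (\ref{eq2.20}).} I would start from the closed form (\ref{eq2.10}) and write ${\cal F}_6(s)$ as a product of three factors: the ratio of gamma functions $\Gamma(s)\Gamma(3/4-s)/[\Gamma(1-s)\Gamma(s-1/4)]$, the factor $1/{\cal A}(s)$, and the factor $1/[1+\tan(\pi s)(1/{\cal A}(s+1/2)-1)]$. Two applications of the reflection formula $\Gamma(z)\Gamma(1-z)=\pi/\sin(\pi z)$, with $z=s$ and with $z=s+1/4$, collapse the first factor to $\frac{\sqrt2}{1+\cot(\pi s)}\cdot\frac{\Gamma(s)^2}{\Gamma(s+1/4)\Gamma(s-1/4)}$; in the surviving quotient of gammas the argument shifts sum to zero, so it tends to $1$, and Stirling's series applied to it gives $1-\frac{1}{16 s}-\frac{15}{512 s^2}-\frac{75}{8192 s^3}+\cdots$ (the $1/s$ coefficient coming out as $-1/16$). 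For the remaining two factors I substitute the expansion (\ref{eq2.6}) of the Lemma, which says precisely that $1/{\cal A}(s)\sim1+{\cal T}_D(s)$ and $1/{\cal A}(s+1/2)-1\sim{\cal T}_D(s+1/2)$ with ${\cal T}_D$ as in (\ref{eq2.21}). Multiplying the three contributions reproduces (\ref{eq2.20}); this is routine bookkeeping with Stirling's formula.

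\emph{Behaviour for $\sigma<-2,\ t>2$.} Here I would use (\ref{eq2.7}) in the rearranged form $\Delta_6(s)={\cal F}_6(1-s)\,\Delta_6(1-s)$. Since $\Re(1-s)>3$, the convergent series (\ref{eq2.16}) applies to $\Delta_6(1-s)$ and holds it within $4^{-3}$ (times a slowly varying factor) of the value $1$; on its own this factor can neither turn $\Delta_6(s)$ out of a quadrant nor keep its modulus below $1$. It then remains to analyse ${\cal F}_6(1-s)$ through (\ref{eq2.20}) at $w=1-s$ (so $\Re w>3$, $\Im w<-2$): there $\cot(\pi w)=i+O(e^{2\pi\Im w})$ and $\tan(\pi w)=-i+O(e^{2\pi\Im w})$, so $\sqrt2/(1+\cot(\pi w))=e^{-i\pi/4}$ up to exponentially small terms, while the two correction factors in (\ref{eq2.20}) are $1+O(|w|^{-1/2})$ with argument $O(|w|^{-1/2})$; since the leading correction ${\cal T}_D(w)\sim\sqrt{\pi/(w-1/2)}$ has positive real part for $\Re w>0,\ \Im w<0$, the product has modulus above $1$ and argument strictly inside $(-\pi/2,0)$. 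The bounded remnant of the region — near $s\approx-2+2i$, where $|s|$ is too small for the asymptotics to bite — I would settle numerically, exactly as the low-$t$ part of Theorem~\ref{thm2-2} is settled (see Figs.~\ref{fig2} and \ref{fig4}).

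\emph{Every phase-zero line from $\sigma=\infty$ reaches the critical line.} By Theorem~\ref{thm2-2} these lines are, for $\sigma>2$, the equally-spaced near-vertical curves $t\approx n\pi/\ln4$ carrying interleaved unit-modulus curves, and any one that touches $\sigma=1/2$ does so only at a zero or pole of $\Delta_6$. Suppose a phase-zero line $L$ failed to reach $\sigma=1/2$. A level set of the harmonic function $\arg\Delta_6$ cannot terminate except at a zero or pole of $\Delta_6$; the closed-contour argument of Section~2 (two phase-zero lines joined by horizontal segments have zero total phase change, so they enclose a pole only by also enclosing a zero, while the numerator $\xi_1(2s)+\xi_1(2s-1)$ has no off-axis zeros) shows that, as long as phase-zero lines do reach $\sigma=1/2$, there are no off-axis poles of $\Delta_6$ for $L$ to end on; and $L$ cannot re-emerge through $\sigma=\infty$, since the lines issuing thence are asymptotically one per interval of length $\pi/\ln4$ in $t$. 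Hence $L$, and every phase-zero line above it, must curve upward and escape to $t=+\infty$ at bounded $\sigma>1/2$. Reflecting in the critical line by $s\mapsto1-\bar s$ (the functional equation (\ref{eq2.7}) composed with conjugation), the image $L^\ast\subset\{\sigma<1/2\}$ of $L$ issues from $\sigma=-\infty$ at the same height $t\approx n\pi/\ln4>2$ and so threads $\{\sigma<-2,\ t>2\}$, and on it $\arg\Delta_6=-\arg{\cal F}_6(s)$, $|\Delta_6|=|{\cal F}_6(s)|\,|\Delta_6(s)|$ with $s$ running along $L$; since near $\sigma=\infty$ one has $\cot(\pi s)=-i$ up to exponentially small terms, so ${\cal F}_6(s)\approx e^{i\pi/4}$ and (by (\ref{eq2.16})) $\Delta_6(s)\approx1$, the whole escaping family would deposit into $\{\sigma<-2,\ t>2\}$ amplitude and phase data at odds with the preceding paragraph — a contradiction. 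Therefore every phase-zero line from $\sigma=\infty$ must reach the critical line.

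The step I expect to be the main obstacle is the concluding move of the third part: turning ``the escaping family forces its reflected images through $\{\sigma<-2,\ t>2\}$ carrying forbidden amplitude/phase'' into a genuinely airtight contradiction. One must check that the escaping lines, and hence their images, really reach arbitrarily far to the left rather than clinging to a thin strip next to $\sigma=1/2$ where (\ref{eq2.20}) provides no control, and one must handle the subdominant oscillatory $4^{-s}$ corrections — comparable to the leading $|w|^{-1/2}$ term once $\Re(1-s)$ is only slightly above $3$ — uniformly over the region. This is exactly the junction at which the reasoning stops short of full rigour, consistent with the paper's avowed aim of an accessible rather than a complete proof.
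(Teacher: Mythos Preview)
Your derivation of (\ref{eq2.20}) and your analysis of $|\Delta_6|$ and $\arg\Delta_6$ in $\sigma<-2,\ t>2$ match the paper's proof essentially line for line: the paper too reduces the gamma quotient via the reflection formula to $\Gamma(s)^2/[\Gamma(s-1/4)\Gamma(s+1/4)]$, uses $1/{\cal A}(s)=1+{\cal T}_D(s)$, and then reads off $\arg{\cal F}_6(s)\approx\pi/4$ and $|{\cal F}_6(s)|>1$ (your choice to evaluate at $w=1-s$ in the lower half-plane rather than at $s$ in the upper half-plane and conjugate is the same calculation in disguise).

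For the third assertion your outline --- if a phase-zero line fails to reach $\sigma=1/2$ then it and all lines above it escape to $t=+\infty$; reflect via the functional equation; obtain a contradiction with the behaviour established in part~2 --- is exactly the paper's. But the specific contradiction you write down does not bite. On the image $L^\ast$ of an escaping phase-zero line $L$ you compute $\arg\Delta_6=-\arg{\cal F}_6(s)\approx-\pi/4$ and $|\Delta_6|=|{\cal F}_6(s)|\,|\Delta_6(s)|>1$; those values are \emph{consistent} with part~2 (fourth-quadrant phase, modulus above unity), not ``at odds'' with it. Since part~2 is itself obtained by reflecting $\sigma>1/2$ data through the same functional equation, testing only the reflected phase-zero lines is close to tautological.

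The paper locates the contradiction in the \emph{amplitude} lines instead. Between successive escaping phase-zero lines lie the unit-modulus lines of Theorem~\ref{thm2-2}, bounding strips in which $|\Delta_6|<1$ alternating with strips in which $|\Delta_6|>1$; the paper first argues (by monotonicity of $|\Delta_6|$ along phase-zero lines and of $\arg\Delta_6$ along unit-amplitude lines, ruling out closed loops) that these, too, must escape to $t=+\infty$ at bounded $\sigma>1/2$. Along such escaping curves $|s|\to\infty$ while $\sigma$ stays bounded, so $|{\cal F}_6(s)|\to1$ and the amplitude contours become asymptotically symmetric under $s\mapsto1-s$. The $|\Delta_6|<1$ strips would therefore have mirror images in $\sigma<1/2$ for large $t$, contradicting the conclusion of part~2 that $|\Delta_6|>1$ throughout that region. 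That is the missing ingredient you anticipated in your final paragraph: the contradiction runs through the sub-unity amplitude strips, not through the phase-zero lines themselves.
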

\begin{proof}
To derive equation (\ref{eq2.20}), we use equations (\ref{eq2.10}) and (\ref{eq2.6}), the latter being ${\cal A}(s)\sim1/(1+{\cal T}_D(s)$, using the notation (\ref{eq2.21}). The ratio $\Gamma(3/4-s)/\Gamma(1-s)$
is rewritten using the functional equation for $\Gamma(s)$. One arrives at the ratio of gamma functions $\Gamma(s)^2/(\Gamma(s-1/4) \Gamma(s+1/4))$, for which the asymptotic expansion occurs in brackets in  (\ref{eq2.20}).

The leading term in the phase of ${\cal F}_6(s)$ comes from $\sqrt{2}/(1+\cot(\pi s)$, which for $t$ large gives $\exp(i\pi/4)$ times terms which are exponentially close to unity. The next contribution
to the phase comes from the ratio involving the ${\cal T}_D$ terms. Taking the leading term for this ratio, we arrive at
\begin{equation}
\arg {\cal F}_6(s)\sim \frac{\pi}{4} -\frac{ \sqrt{2\pi/t}}{1 +\sqrt{\pi/(2 t)}}, ~{\rm for}~\sigma>1/2, t>>1 .
\label{eq2.22}
\end{equation}
This algebraic term dominates the exponentially varying terms in $\Delta_6(s)$ for $\sigma>2$, meaning the phase of $\Delta_6(1-\sigma+ i t)$ varies in a way controlled by $-\arg {\cal F}_6(s)$, and so lies in the fourth quadrant, in keeping with Figs. \ref{fig3} and \ref{fig4}.

The analysis of the variation of the modulus of ${\cal F}_6(s)$ is more delicate than that of its phase. The leading asymptotic terms come from the last factor in (\ref{eq2.20}), which is to good accuracy
$[1+{\cal T}_D(s)]/[1+i{\cal T}_D(s+1/2)]$. The expansion of the modulus of ${\cal F}_6(s)$  is then
\begin{equation}
|{\cal F}_6(s)|\sim \left| 1+\frac{(1-i)\left(\sqrt{\pi/s}-i\pi/s \right)}{1+\pi/s}\right|.
\label{eq2.23}
\end{equation}
Thus, when $t>>\sigma$ and $\sigma$ is positive and not small, $|{\cal F}_6(s)|>1$, and it tends to unity from above as $t$ increases. Once again, the algebraic behaviour of $|{\cal F}_6(s)|$ dominates the exponential behaviour of $\Delta(s)$, so that $|\Delta_6(1-s)|>1$ for $t$ large and positive, and $\sigma$ above and not close to the real axis.

The final statement to be proved is that all lines of phase zero coming from $\sigma=\infty$ reach the critical line.
For large $\sigma$, the lines of phase zero mark the boundary between regions in which the
phase of $\Delta_6(s)$ lies in the fourth quadrant (on one side of the line) and the first quadrant (on the other side).
The amplitude of  $\Delta_6(s)$  in this region is given from (\ref{eq2.16}) by $1+(-1)^n (1-\sqrt{2})/4^{\sigma}$, and thus
increases as $\sigma$ decreases for $n$ odd, and decreases for $n$ even. The amplitude is a monotonic function of $\sigma$, and must continue to behave monotonically even when
$\sigma$ is insufficiently large for (\ref{eq2.16})  to be accurate. Indeed, if say
the amplitude were decreasing, and then started to increase, the lines of constant amplitude forming around the central line of constant phase would have to form closed loops, not possible
unless zeros and poles of $\Delta_6(s)$ lay within the loops. A similar argument of course applies
to the case of increasing amplitude along the line of zero phase.

Turning now to the lines of constant amplitude of unity, on one side the amplitude exceeds unity, and on the other it  is less than unity. The phase of $\Delta_6(s)$ along these lines in the region of large $\sigma$ is given by $-(-1)^n(1-\sqrt{2}) /4^{\sigma}$, and so lies in the fourth quadrant for $n$ odd, and in the first for $n$ even. It increases monotonically in its magnitude as $\sigma$ decreases, and again this conclusion holds true even if $\sigma$ is insufficiently large for (\ref{eq2.16}) to be accurate. 

 For a line of phase zero coming from infinity not to reach the critical line,  lines of constant phase and amplitude would have to curve upwards and run towards $t=\infty$, and they would have to do that
everywhere above the starting value of $t$. (Otherwise, they would cut subsequent regions of lines of constant phase and amplitude heading towards the critical line, requiring the presence of accumulation points of zeros and poles in the finite part of the plane, not in keeping with the analyticity  of $\Delta_6(s)$.) However, such lines coming from $\sigma=\infty$ would require, from the
functional equation (\ref{eq2.7})  corresponding lines to exist mirrored about $\sigma=1/2$. Such lines on the right would have phases in the first and fourth quadrants, and on the left their phase values for $|s|>>1$ would lie in the interval $[-5\pi/8,3\pi/8]$.

The lines of constant amplitude are symmetric under $s\rightarrow 1-s$ if $|s|>>1$. This means sets of lines of constant amplitude are required to start in $\sigma<1/2$ at high $t$ values and proceed as $\sigma$ decreases towards smaller values of $t$. This gives a contradiction, since the lines of constant amplitude curving up
towards the critical line correspond to amplitudes both larger and smaller than unity, while amplitudes in $\sigma<1/2$ and $t$ not small are restricted to amplitudes exceeding unity. In summary, lines of constant amplitude and phase curving up towards infinite $t$ near the critical line are ruled out by the asymptotic behaviours of $\Delta_6(1-\sigma+i t)$ for large $\sigma$ and for large $t$.
\end{proof}

\begin{corollary}
The lines of constant phase coming from $\sigma=\infty$ and reaching the critical line all correspond to strictly the same phase (i.e. their phase is zero, not zero modulo $2\pi$)
\end{corollary}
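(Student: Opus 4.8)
The plan is to fix, once and for all, the branch of the argument of $\Delta_6(s)$ that the phrase ``line of constant phase'' tacitly refers to, by reading it off near $\sigma=\infty$ and then carrying it inwards along the curve. By Theorems~\ref{thm2-2} and~\ref{thm2.3}, each of the curves under consideration runs from $\sigma=\infty$ to a single zero or pole of $\Delta_6(s)$ on $\sigma=1/2$, with $\Delta_6(s)$ analytic and nowhere zero along the open portion between these endpoints; hence the restriction of $\Delta_6$ to a parametrisation of the curve is a continuous, non-vanishing map into $\mathbb{C}\setminus\{0\}$, it admits a continuous argument function, and ``constant phase'' means exactly that this function is constant.

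First I would use the expansion (\ref{eq2.16}) to record that, for $\sigma$ larger than some $\sigma_0$, one has $|\Delta_6(s)-1|\le C\,4^{-\sigma}$ uniformly in $t$: the size of the series of exponentially small terms there is governed by its first term for $\sigma$ large, since the factors $\sqrt{\pi/(s-1/2)}$ stay bounded once $\sigma$ is bounded below. Consequently $\Delta_6(s)$ lies in the open disc $\{|z-1|<1\}$, on which the principal value of the argument is a genuine continuous branch, with values in $(-\pi/2,\pi/2)$ and tending to $0$ as $\sigma\to\infty$. The continuous argument carried along the curve coincides with this principal value for $\sigma>\sigma_0$; being constant, it therefore equals $0$ along the whole curve, including in the limit at the critical-line endpoint, where $\arg\Delta_6$ itself jumps in accord with (\ref{eq2.9}). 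Thus every such curve carries phase exactly $0$ --- the value $0$ as a genuine real number, not merely $0$ modulo $\pi$ or $2\pi$ --- and in particular all of them share one and the same phase, which is the assertion.

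Granted Theorems~\ref{thm2-2} and~\ref{thm2.3}, the residual work is light: one has to make rigorous the uniform estimate $|\Delta_6(s)-1|\le C\,4^{-\sigma}$ that pins the curve inside $\{|z-1|<1\}$ for \emph{all} $\sigma>\sigma_0$ along it --- controlling the full series (\ref{eq2.16}), not just its leading term --- and one leans on the substantive part of Theorem~\ref{thm2.3}, namely that no such curve doubles back towards $t=\infty$ before reaching the critical line, so that the continuous argument remains defined and constant the whole way in. The rest is branch bookkeeping, and I expect no genuine obstacle beyond that uniform estimate and its interplay with the geometry already settled in Theorem~\ref{thm2.3}. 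Finally it is worth stressing why the strict --- rather than mod-$2\pi$ --- equality matters: when Section~3 assembles a closed contour from two phase-zero curves joined by arcs of constant $\sigma$, the cancellation of the argument increments along the two phase-zero curves, on which the argument is genuinely $0$, is precisely what makes the argument-principle count of enclosed zeros and poles valid, and a spurious difference of a non-zero multiple of $2\pi$ between the two would invalidate it.
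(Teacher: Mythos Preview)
Your argument is correct and is markedly more direct than the paper's. You anchor the phase at $\sigma=\infty$: since $\Delta_6(s)\to 1$ there, the principal argument is well-defined and equals~$0$; the continuous argument carried in along any one of the constant-phase curves then stays at exactly~$0$, which is the assertion. The only genuine work left, as you note, is the uniform bound $|\Delta_6(s)-1|\le C\,4^{-\sigma}$ for $\sigma>\sigma_0$, which is routine once one observes that the coefficients $1-\sqrt{n}+(n-1)\sqrt{\pi/(s-1/2)}$ in (\ref{eq2.16}) are bounded in modulus for $|s-1/2|$ bounded below.

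The paper takes a quite different route. Rather than anchoring at $\sigma=\infty$, it anchors the phase in the region $\sigma\ll 1/2$, to the \emph{left} of the critical line, where Theorem~\ref{thm2.3} shows that $\arg\Delta_6$ is governed by $-\arg\mathcal{F}_6(s)$ and lies unambiguously in the fourth quadrant. It then argues, via the Cauchy--Riemann relations and the explicit form (\ref{eq2.9}) of the phase on the critical line, that the fourth-quadrant region is connected across $\sigma=1/2$, thereby linking the unambiguous phase on the left to the phase carried by the curves on the right. What this buys is a global picture of how phase regions on either side of the critical line fit together --- information that feeds into the later discussion of how phase strips in the first through fourth quadrants are arranged near $\sigma=1/2$. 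Your argument, by contrast, settles the corollary as stated with no detour through the functional equation, but does not by itself yield that two-sided picture. For the narrow purpose of the corollary and its use in Theorem~\ref{thm3.1}, your approach is cleaner; one small caveat is that your closing explanation of \emph{why} the strict equality matters is not quite the right one --- the argument increments along the two constant-phase arcs vanish regardless of which constants they carry, since $d\arg\Delta_6=0$ there; the corollary's real role is to pin down the phase change along the short connecting arcs at large~$\sigma$ as genuinely zero rather than a nonzero multiple of~$2\pi$.
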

\begin{proof}
All lines of "phase zero" (meaning possibly zero modulo $2\pi$) of $\Delta_6(s)$ coming from $\sigma=\infty$ reach the critical line. Thus, an infinite number of intervals of $t$ with $\Delta_6(\sigma+i t)$
in the fourth quadrant exist for each value of $\sigma$ down to $\sigma=1/2$, and similarly an infinite number of intervals of $t$ with $\Delta_6(\sigma+i t)$
in the first quadrant exist for each value of $\sigma$ down to $\sigma=1/2$. As $\sigma$ decreases, intervals with the phase lying in the second and third quadrants may also appear, with the strips in the second quadrant then bordered by strips in the first and third quadrants, and strips in the first quadrant bordered by strips in the second and fourth quadrants.  (The occurrence of phase intervals in the second and third quadrant would be required by the failure of the Riemann hypothesis, or by the occurrence of zeros of order two or greater of $\xi_1(s)$.) From equation (\ref{eq2.9}), the critical line between poles and zeros has phase either in the fourth or second quadrants. In an interval of $t$ running from a zero up to a pole, $|\Delta_6(\sigma+i t)|$ is increasing with $t$, so by the Cauchy-Riemann equations, $\partial \arg\Delta_6(\sigma+i t)/\partial \sigma<0$ there. Conversely, in an interval from  a pole up to a zero,  $\partial \arg\Delta_6(\sigma+i t)/\partial \sigma>0$ and lines of constant phase greater than the phase on the critical line in that segment spread to the right of the critical line, while lines of constant but lower phase go left. In this second case, then, the intervals of phase in the fourth quadrant continue smoothly across the critical line, with a similar argument applying in the first case. Thus, the fourth quadrant region bridges the critical line, from its algebraic region to its left, to the exponential region to its right. In the algebraic region, there is no ambiguity of phase, with the phase always lying in the fourth quadrant and being controlled by the phase of ${\cal F}_6(s)$. We can thus link the uniquely specified phase in the fourth quadrant in $\sigma<<1/2$ to that in $\sigma>>1/2$, which proves the result.
\end{proof}

\section{The Riemann Hypothesis for $\zeta(2 s-1/2)$}

Using Theorem \ref{thm2.3},  the argument of I can be followed, to prove the equivalent of its  Theorem 4.3. In the case of $\Delta_6(s)$, it is known from III  that the numerator function ${\cal T}_+(s)$ obeys the Riemann hypothesis.
\begin{theorem}
Given  that the function ${\cal T}_+(s)$ obeys the Riemann hypothesis,  then $\zeta(2 s-1/2)$ obeys the Riemann hypothesis. 
\label{thm3.1}
\end{theorem}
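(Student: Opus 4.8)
The plan is to argue by contradiction using the argument principle, in the spirit of Theorem~4.3 of~I. Suppose $\zeta(2s-1/2)$ violated the Riemann hypothesis; then it would vanish at some $s_0=\sigma_0+it_0$ with $\sigma_0\neq1/2$, and by the functional equation~(\ref{eq2.7}) together with the symmetry $s\mapsto\bar s$ we may assume $\sigma_0>1/2$, $t_0>0$. Since $\zeta(2s-1/2)$ sits in the denominator of~(\ref{eq2.2}), such a zero is a pole of $\Delta_6(s)$ lying strictly to the right of the critical line. The first step is to record that in the open region $\sigma>1/2$, $t>0$ these are the \emph{only} singularities of $\Delta_6(s)$: the numerator $\xi_1(2s)+\xi_1(2s-1)$ is regular there apart from the pole at $s=1$ on the real axis; the trivial zeros of $\zeta(2s-1/2)$ occur only with $\sigma<1/2$; and the remaining zeros and poles of $\Delta_6(s)$ produced by the factors $\xi_1(2s-1/2)$ and ${\cal D}(s)$ are, by Theorem~\ref{thm6-1}, confined to the negative real axis and to $s=1/2,3/4$. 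Because the hypothesis that ${\cal T}_+(s)$ obeys the Riemann hypothesis forbids zeros of the numerator with $\sigma>1/2$, $\Delta_6(s)$ is also zero-free in $\{\sigma>1/2,\,t>0\}$. Hence in that region $\Delta_6(s)$ is zero-free, and it is pole-free precisely when $\zeta(2s-1/2)$ obeys the Riemann hypothesis.

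Next I would invoke Theorems~\ref{thm2-2} and~\ref{thm2.3} and the Corollary: there is an infinite family $L_1,L_2,\dots$ of lines of \emph{strictly} zero phase of $\Delta_6(s)$, asymptotically equally spaced in $t$ with spacing $\pi/\ln4$ to leading order (eqs.~(\ref{eq2.18})--(\ref{eq2.19})), each running from $\sigma=\infty$ in to the critical line, which it meets at a zero or a pole of $\Delta_6(s)$. Between two consecutive such lines $L_n,L_{n+1}$ I would build a closed contour $C_n$ bounded by the segment of $L_n$, the segment of $L_{n+1}$, a horizontal cap at a large fixed abscissa $\sigma=\Sigma$, where $\Delta_6(s)=1+O(4^{-\Sigma})$ by~(\ref{eq2.16}), and a cap running just to the right of the critical line, indented by small semicircles into $\sigma>1/2$ around each zero and pole of $\Delta_6(s)$ it would otherwise meet; $n$ is chosen so that the putative pole $s_0$ is enclosed. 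The change of $\arg\Delta_6(s)$ around $C_n$ is the sum of: zero along each of $L_n$ and $L_{n+1}$, by the Corollary; a quantity that is $o(1)$ as $\Sigma\to\infty$ along the right-hand cap; and a contribution along the critical-line cap which I would estimate from the smooth approximation~(\ref{eq2.9}) for $\arg\Delta_6(1/2+it)$ together with the interlacing of the zeros and poles of $\Delta_6(s)$ on the critical line -- the latter following, via source neutrality, from the agreement (established numerically in~III) of the zero-counting functions of ${\cal T}_+(s)$ and of $\zeta(2s-1/2)$. With the endpoints of this cap placed at ``clean'' points of the slow oscillation of $\arg\Delta_6(1/2+it)$, that last contribution is bounded and, in fact, strictly smaller than $2\pi$ in modulus.

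Granting that estimate, the argument principle gives $Z_{C_n}-P_{C_n}=0$, where $Z_{C_n}$ and $P_{C_n}$ count the zeros and poles of $\Delta_6(s)$ inside $C_n$. By the first step $Z_{C_n}=0$, hence $P_{C_n}=0$; but $s_0$ was enclosed, a contradiction. Since the $C_n$ exhaust $\{\sigma>1/2,\,t>T_0\}$ while $0<t\le T_0$ is covered by the numerical verification already used in Theorem~\ref{thm2-2}, no zero of $\zeta(2s-1/2)$ with $\sigma\neq1/2$ can exist, which is the assertion (and is equivalent to the Riemann hypothesis for $\zeta(s)$).

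The step I expect to be the genuine obstacle is the estimate of the change of argument along the cap near the critical line. Away from the zeros and poles of $\Delta_6(s)$ its phase is governed by~(\ref{eq2.9}), but near those points the phase swings by amounts close to $\pm\pi$, and one must show that the contributions of the interlaced zeros and poles cancel in pairs -- including through the ``atypical'' $ZZPP$ clusters visible in Figs.~\ref{fig3} and~\ref{fig3ext} -- so that no leftover multiple of $2\pi$ survives to harbour an extra pole. This is exactly where source neutrality carries the argument, and rendering the matching of the zero densities of ${\cal T}_+(s)$ and $\zeta(2s-1/2)$ quantitative enough to exclude even one stray off-axis zero is the point at which the present treatment stops short of full rigour.
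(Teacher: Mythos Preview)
Your proposal shares the paper's skeleton---apply the argument principle to a closed contour bounded on two sides by phase-zero lines $L_n,L_{n+1}$ of $\Delta_6(s)$---but closes the contour differently, and that is where you and the paper diverge. The paper joins $L_1$ and $L_2$ by \emph{two vertical segments at constant $\sigma$, both strictly to the right of the critical line}; it never touches $\sigma=1/2$. With phase identically zero along $L_1,L_2$ (Corollary~2.5) and the right vertical sitting where $\Delta_6\approx1$, the paper simply asserts that the total change of argument round $C$ is zero, whence zeros $=$ poles inside, whence (since ${\cal T}_+$ has no off-axis zeros) no off-axis zeros of $\zeta(2s-1/2)$ inside. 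Sliding the left vertical segment toward $\sigma=1/2^+$ then lets the family of contours exhaust $\{\sigma>1/2,\,t>0\}$.

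Your contour instead runs along the critical line with semicircular indentations at each on-line zero and pole of $\Delta_6$, and this is precisely the origin of the obstacle you flag: bounding the swing of $\arg\Delta_6$ through each indentation and arguing that the contributions from interlaced zeros and poles cancel in pairs (including through the $ZZPP$ clusters). That difficulty is real for your contour, but it is self-inflicted; by keeping the left cap at a fixed $\sigma_1\in(1/2,\sigma_0)$ the paper avoids the on-line zeros and poles altogether, so no indentations and no source-neutrality bookkeeping are needed at this stage. (Source neutrality reappears in the paper only in Theorem~\ref{thm3-2} and its Corollary, where the aim is a finer count of on-line zeros and poles, not the exclusion of off-line ones.) There is also a minor technical wrinkle in your construction: the endpoints of $L_n,L_{n+1}$ on the critical line are themselves zeros or poles of $\Delta_6$, so your indented cap must be matched to the phase-zero lines across small arcs on which the phase is \emph{not} zero, an extra contribution you do not account for. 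Replacing your critical-line cap by a vertical segment at $\sigma_1>1/2$ removes both issues and recovers the paper's short argument.
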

\begin{proof}
Consider lines $L_1$ and $L_2$  in $t>0$ along which the phase of $\Delta_6(s)$  is zero. Join these
lines with two lines  to the right of the critical line along which  $\sigma$ is constant. Then the change of argument of  $\Delta_6(s)$ around the closed contour  $C$ so formed is zero, so by the Argument Principle the number of poles inside the contour equals the number of zeros. Each non-trivial zero is formed by a zero of ${\cal T}_+(s)$, and as  there are no such zeros within $C$ there can be no zeros of  $\zeta(2 s-1/2)$   within $C$.

These arguments prove the theorem in the region to the right of the critical line lying between lines of zero phase of $\Delta_6(s)$, with the result to the left of the critical line then guaranteed by the functional equation (\ref{eq2.7}) .

To complete the proof one must show that any point in the region $\sigma>1/2$, $t>0$ is enclosed between lines of phase zero of  $\Delta_6(s)$ coming from $\sigma=\infty$. This is evidently the case, since several such lines just above $t=0$ are shown in Fig. \ref{fig2}. In addition, for any $\sigma>1/2$ the infinite number of such constant phase lines cannot cluster into a finite interval of $t$, since that would indicate an essential singularity of  $\Delta_6(s)$ for that $\sigma$.
\end{proof}

\begin{theorem}
Given the Riemann hypothesis holds for $\zeta(s)$ and ${\cal T}_+(s)$, then given any two lines of phase zero of $\Delta_6(s)$ running from $\sigma=\infty$ and intersecting the critical line, the number of zeros and poles of $\Delta_6(s)$ counted according to multiplicity and lying properly between the lines must be the same.
\label{thm3-2}
\end{theorem}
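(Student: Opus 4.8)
The plan is to refine the argument-principle computation of Theorem~\ref{thm3.1}: with both Riemann hypotheses in force, $\Delta_6(s)$ has no off-line zeros or poles at all, and one can then count the zeros and poles that lie on the critical line between the two phase-zero lines. Write $L_1$ for the lower line and $L_2$ for the upper, meeting $\sigma=1/2$ at $s_1=\frac12+it_1$ and $s_2=\frac12+it_2$ with $t_1<t_2$; by Theorem~\ref{thm2-2} each of $s_1,s_2$ is itself a zero or a pole of $\Delta_6(s)$. The first step records what the two hypotheses give for the open region $R$ bounded by $L_1$, $L_2$ and the segment $[s_1,s_2]$ of $\sigma=1/2$: the zeros of $\Delta_6(s)$ in $t>0$ are the zeros of the numerator $\xi_1(2s)+\xi_1(2s-1)$, which lie on $\sigma=1/2$ by the Riemann hypothesis for ${\cal T}_+(s)$; its poles in $t>0$ are the zeros of $\zeta(2s-1/2)$, sitting at $s=\rho/2+1/4$ for zeros $\rho$ of $\zeta(s)$, which lie on $\sigma=1/2$ by the Riemann hypothesis for $\zeta(s)$; and by Theorem~\ref{thm6-1} the balancing factor ${\cal D}(s)$ contributes zeros and poles only on the real axis. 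Hence $\Delta_6(s)$ has no zeros or poles in $R$.

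The second step is the argument principle applied to an indented boundary of $R$: up $L_1$, across the segment of a far vertical line $\sigma=\Sigma$ joining the crossings of $L_1$ and $L_2$, down $L_2$, and back along $\sigma=1/2$ from $s_2$ to $s_1$, this last leg detoured by small semicircles into $\sigma>1/2$ around $s_1$, $s_2$ and every zero and pole of $\Delta_6(s)$ on the segment, so that the interior contains no zero or pole. The net change of $\arg\Delta_6(s)$ around the contour is therefore zero. Along $L_1$ and $L_2$ the phase is identically zero, and on $\sigma=\Sigma$ equation~(\ref{eq2.16}) gives $\Delta_6=1+O(4^{-\Sigma})$, so those legs contribute nothing in the limit $\Sigma\to\infty$; the whole change of argument is carried by the critical-line leg. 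On that leg (\ref{eq2.12})--(\ref{eq2.13}) identify $\arg\Delta_6(\frac12+it)$ with $-\arg{\cal D}(\frac12+it)$ modulo $\pi$, a continuous function since ${\cal D}(s)$ has no zeros or poles for $t>0$ (Theorem~\ref{thm6-1}); a detour around a zero of order $m$ contributes $-m\pi$ and around a pole of order $m$ contributes $+m\pi$, while the two corner detours at $s_1$ and $s_2$ contribute terms $c_1,c_2$ fixed by the local expansions $\Delta_6(s)\sim \mathrm{const}\cdot(s-s_j)^{\pm m_j}$ together with the condition $\arg\Delta_6=0$ along $L_j$. Collecting the contributions, the vanishing of the total reads
\[
\pi(Z-P)=c_1+c_2+\arg{\cal D}(s_2)-\arg{\cal D}(s_1),
\]
with $Z$ and $P$ the numbers of zeros and poles of $\Delta_6(s)$ strictly between the lines, counted with multiplicity.

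The third step, which is the main obstacle, is to show that the right-hand side of this identity vanishes --- equivalently, that it has absolute value below $\pi$, which together with $Z-P\in\mathbb Z$ forces $Z=P$. This is where quantitative phase information enters: (\ref{eq2.9}) pins $\arg{\cal D}(\frac12+it)$ near $\pi/8$ for large $t$ and bounds $\arg{\cal D}(s_2)-\arg{\cal D}(s_1)$; Theorem~\ref{thm6-1} (all zeros and poles of ${\cal D}$ on the real axis) rules out winding of $\arg{\cal D}$ at moderate $t$; and the monotonicity of $|\Delta_6(s)|$ along a line of phase zero, established in the proof of Theorem~\ref{thm2.3} and its corollary, determines the branch of $\arg\Delta_6$ just above $s_1$ and just below $s_2$ and so evaluates $c_1$ and $c_2$. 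One then verifies that the non-integer parts of $c_1+c_2$ cancel against $\arg{\cal D}(s_2)-\arg{\cal D}(s_1)$. A fully rigorous treatment of these branch matters, and of possible multiple zeros of $\zeta(2s-1/2)$ (absorbed here into ``counted according to multiplicity''), is precisely where --- consistent with the rest of the paper --- the present argument stops short of a complete proof; the analogue of Theorem~4.3 of~I is of the same character.
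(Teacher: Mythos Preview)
Your contour and decomposition are essentially those of the paper: two phase-zero lines, a far vertical, and the intervening segment of the critical line, with the whole argument-principle count carried by the critical-line leg. The difference is that the paper does not leave the endpoint/corner contributions undetermined. It simply observes that the phase of $\Delta_6(s)$ on the critical line just below $P_u=(1/2,t_u)$ is some value $\phi_6(t_u)$ and just above $P_l=(1/2,t_l)$ is $\phi_6(t_l)$; since the phase on $L_1$ and $L_2$ is $0$, the two corner contributions are exactly $\phi_6(t_u)$ and $-\phi_6(t_l)$. The smooth variation of $\arg\Delta_6$ along the critical line between the interior zeros and poles totals $\phi_6(t_l)-\phi_6(t_u)$, so the endpoint pieces and the smooth variation cancel identically, leaving only $-\pi\sum z_n+\pi\sum p_n=0$.

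In your formulation this is the statement $c_1+c_2+\arg{\cal D}(s_2)-\arg{\cal D}(s_1)=0$, which you flag as ``the main obstacle'' and propose to attack with the quantitative estimate (\ref{eq2.9}) and an $|Z-P|<1$ argument. That machinery is unnecessary: your $c_1,c_2$ are precisely $-\phi_6(t_l)$ and $\phi_6(t_u)$, and by (\ref{eq2.12}) the smooth variation is $-\phi_6(t_u)+\phi_6(t_l)$ (your $\arg{\cal D}$ difference), so the cancellation is exact rather than approximate. The paper's version therefore needs no bound on $\arg{\cal D}(s_2)-\arg{\cal D}(s_1)$, no analysis of branches at moderate $t$, and no separate treatment of multiple zeros beyond counting with multiplicity. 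Your introduction of semicircular detours is a legitimate alternative to the paper's ``phase-jump through the zero/pole'' bookkeeping, but it obscures rather than aids the cancellation.
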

\begin{proof}
%This theorem is modelled on theorem 4.5 of McPhedran {\em et al} (2011), and the proof from that reference carries over without change.
Consider  a contour composed of two lines of phase zero, the segment between them on the critical line, and a segment between them
on the interval $\sigma>>1$. The total phase change  around this contour is strictly zero, since the region $\sigma>>1$ has the phase of $\Delta_6(\sigma+i t)$ constrained to be close to zero. Let $P_u=(1/2,t_u)$ be the point at the upper end of the segment on the critical line, and $P_l=(1/2,t_l)$ be the point at the lower end.

The total phase change between a point approaching $P_u$ on the contour from the right and a point leaving $P_l$ going right is, by construction, zero. This phase change is made up of contributions from the changes of phase at the zero or pole $P_u$, from the zero or pole $P_l$, from the $N_z$ zeros and $N_p$ poles on the critical line between $P_u$ and $P_l$, and from the phase change between the zeros and poles. In this list, the first phase change is $\phi_6(t_u)$, the phase of $\Delta_{6}(s)$ on the critical line just below $t_u$. The second change is $-\phi_6(t_l)$, where $\phi_6(t_l)$ is the phase of 
$\Delta_6(s)$  just above $t_l$. Giving zero $n$ a multiplicity $z_n$, and pole $n$ a multiplicity $p_n$, the phase change at the former is $-\pi z_n$ and at the latter $\pi p_n$. The phase change between zeros and poles is $\phi_6(t_l)-\phi_6(t_u)$. Hence the phase difference is
\begin{equation}
\phi_6(t_u)-\phi_6(t_l)-\pi \left(\sum_{n=1}^{N_z}z_n-\sum_{n=1}^{N_p}p_n\right)+\phi_6(t_l)-\phi_6(t_u)=0,
\label{eq3.1}
\end{equation}
leading to
\begin{equation}
\sum_{n=1}^{N_z}z_n=\sum_{n=1}^{N_p}p_n ,
\label{eq3.2}
\end{equation}
as asserted.

\end{proof}
\begin{corollary}
Given  the Riemann hypothesis holds for $\zeta(s)$ or $L_{-4}(s)$, if all zeros of $\zeta(s)$ on the critical line have multiplicity one, the numbers of zeros and poles on the critical line between any pair of lines of phase zero of $\Delta_6(s)$ coming from $\sigma=\infty$ are the same. The distribution functions for zeros of ${\cal T}_+(s)$ and $\zeta (2s -1/2)$
on the critical line
must then agree in all terms which   do not go to zero as $t\rightarrow \infty$.
\label{corol5-3}
\end{corollary}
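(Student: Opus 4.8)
The plan is to combine the two halves of the preceding development: Theorem~\ref{thm3-2} gives, under the hypothesis that the Riemann hypothesis holds for $\zeta(s)$ (equivalently for $\zeta(2s-1/2)$) and for ${\cal T}_+(s)$, the identity $\sum_{n=1}^{N_z}z_n=\sum_{n=1}^{N_p}p_n$ between any two lines of phase zero of $\Delta_6(s)$ reaching the critical line from $\sigma=\infty$. First I would invoke the additional hypothesis that every zero of $\zeta(s)$ on the critical line is simple. Since the poles of $\Delta_6(s)$ on the critical line arise precisely from the zeros of the denominator $\xi_1(2s-1/2){\cal D}(s)$ — and the balancing factor ${\cal D}(s)$ contributes only trivial zeros off the critical line together with the gamma poles already accounted for in Theorem~\ref{thm6-1} — the relevant poles on $\sigma=1/2$ are exactly the images under $s\mapsto(s+1/2)/2$ of the nontrivial zeros of $\zeta(2s-1)$, i.e. zeros of $\zeta(2s-1/2)$. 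By the simplicity hypothesis each such pole has $p_n=1$, so $\sum p_n=N_p$; similarly each zero, being a zero of ${\cal T}_+(s)$ which by III is simple, has $z_n=1$, so $\sum z_n=N_z$. Hence $N_z=N_p$ between any consecutive pair of phase-zero lines, and therefore between any pair of them.

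Next I would pass from this equality of counts to equality of distribution functions. Let $N_{\mathcal T}(t)$ and $N_\zeta(t)$ denote the counting functions for zeros of ${\cal T}_+(s)$ and of $\zeta(2s-1/2)$ respectively up to height $t$ on the critical line. The phase-zero lines of $\Delta_6(s)$ are, by Theorem~\ref{thm2-2} and equations~(\ref{eq2.18})--(\ref{eq2.19}), asymptotically equally spaced in $t$, accumulating at spacing $\pi/\ln 4$ in the variable $t\ln 4$; more precisely they land on the critical line at heights that differ from an arithmetic progression by a term that is $O(1/\sqrt{t})$ and hence $o(1)$. The equal-count result says that within each such interval $N_{\mathcal T}$ and $N_\zeta$ increase by the same integer, so $N_{\mathcal T}(t)-N_\zeta(t)$ is constant on each interval and can change only when $t$ crosses a phase-zero line, where it jumps by at most a bounded amount governed by how many zeros/poles sit exactly at the crossing height. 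Since the interval lengths are bounded and bounded below asymptotically, and the number of zeros of either function in a bounded interval is itself bounded, $N_{\mathcal T}(t)-N_\zeta(t)$ is a bounded function of $t$. A bounded function contributes nothing to any term of the asymptotic expansion of a counting function that tends to infinity with $t$ — the leading Riemann--von Mangoldt-type term $\frac{t}{\pi}\ln\frac{t}{2\pi e}$ and all lower-order divergent terms must therefore coincide for the two functions.

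I would then remark that this is exactly the statement of source neutrality invoked in Section~2, now obtained a posteriori as a consequence rather than assumed, which is the point of the corollary: the numerical agreement of distribution functions reported in III is promoted to an identity of all divergent terms. The main obstacle I anticipate is the control of $N_{\mathcal T}(t)-N_\zeta(t)$ across the phase-zero crossings: one must be sure that no phase-zero line from $\sigma=\infty$ \emph{coincides} at the critical line with an accumulation of zeros or poles, and that the ``atypical'' configurations noted in Figs.~\ref{fig3}--\ref{fig3ext} (successive poles ZPPZ, islands of first-quadrant phase) do not spoil the bounded-jump argument. This is handled by noting that Theorem~\ref{thm2-2} already forces every phase-zero line to meet $\sigma=1/2$ at a single pole or zero of $\Delta_6(s)$, and Corollary~2.4 forces all these lines to carry strictly the same phase; together with the analyticity of $\Delta_6(s)$ (no finite accumulation points of zeros or poles) this bounds the discrepancy uniformly. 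The remaining delicate point — excluding a slow secular drift hidden inside the $o(1)$ spacing corrections — is controlled because the spacing correction in~(\ref{eq2.18}) is summably small relative to nothing; rather, it is the \emph{integer} equality of counts per interval, not the interval lengths, that pins $N_{\mathcal T}(t)-N_\zeta(t)$, so the argument is insensitive to the precise geometry of the lines.
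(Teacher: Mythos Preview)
Your proposal is correct and follows essentially the same route as the paper: invoke Theorem~\ref{thm3-2} together with the simplicity hypotheses to reduce $\sum z_n=\sum p_n$ to $N_z=N_p$ in each interval between consecutive phase-zero lines, and then pass from interval-by-interval equality of counts to agreement of the divergent parts of the distribution functions.

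Two minor points of comparison. First, the paper's argument for the second assertion is slightly sharper in formulation: since there are no zeros or poles below the first phase-zero line, the two counting functions agree \emph{exactly} at every intersection point, not merely up to a bounded discrepancy; this already rules out any non-vanishing difference term. Your boundedness argument reaches the same conclusion but by a weaker intermediate step, and your worries about jumps at the crossings and ``slow secular drift'' are therefore unnecessary --- the integer equality per interval, starting from zero, pins $N_{\cal T}(t)-N_\zeta(t)$ to be zero at the intersection points themselves. Second, you are right to invoke explicitly the simplicity of the zeros of ${\cal T}_+(s)$ from~III to justify $z_n=1$; the paper's one-line ``simple consequence of Theorem~\ref{thm3-2}'' leaves this implicit. (Incidentally, your illustrative leading term should read $\tfrac{t}{\pi}\log\tfrac{t}{\pi e}$ rather than $\tfrac{t}{\pi}\log\tfrac{t}{2\pi e}$, matching~(\ref{eq3.4}); this does not affect the argument.)
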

\begin{proof}
The first assertion is a simple consequence of theorem \ref{thm3-2}. The second also follows from
theorems \ref{thm3-2} : the number of zeros and poles between successive zero lines coming from $\sigma=\infty$ match for all such pairs of lines. There are no zeros or poles on the critical line before the first line of phase zero off the real axis coming from $\sigma=\infty$,
so the distribution functions of zeros of the numerator and denominator of $\Delta_6(s)$
must then agree exactly at  the infinite set of values of $t$ corresponding to the intersection points above this first line, and in neighbourhoods including each member of the set.  This precludes
those distribution functions differing by terms which do not go to zero as $t\rightarrow \infty$.
\end{proof}

From Titchmarsh and Heath-Brown (1987), given the Riemann hypothesis holds for $\zeta(s)$,
the distribution function for its zeros on the critical line is
\begin{equation}
N_\zeta(\frac{1}{2},t)=\frac{t}{2\pi} \log (t)-\frac{t}{2\pi}(1+\log(2\pi))+I_\zeta(\frac{1}{2},t)+F_\zeta(\frac{1}{2},t),
\label{eq3.3}
\end{equation}
where the functions $I$ and $F$ denote the sums of all terms which go to infinity as $t\rightarrow \infty$ or which remain finite, respectively. Hence, 
\begin{equation}
N_\zeta(\frac{1}{2},2 t)=\frac{t}{\pi} \log (t)-\frac{t}{\pi}(1+\log(\pi))+I_\zeta(\frac{1}{2},2 t)+F_\zeta(\frac{1}{2},2 t),
\label{eq3.4}
\end{equation}
From the result of corollary \ref{corol5-3},  for the distribution function of the zeros of ${\cal T}_+(s)$ on the critical line, it follows that
\begin{equation}
N_{+}(\frac{1}{2},t)=\frac{t}{\pi} \log (t)-\frac{t}{\pi}(1+\log(\pi))+I_{+}(\frac{1}{2},t)+F_{+}(\frac{1}{2},t),
\label{eq3.5}
\end{equation}
where 
\begin{equation}
I_\zeta(\frac{1}{2},2 t)=I_{+}(\frac{1}{2},t).
\label{del5-15}
\end{equation}
Thus, the distribution function for the zeros of $\zeta(s)$ determines that for ${\cal T}_+(s)$, in all important terms.

The work of R.C. McPhedran on this project has been supported by the Australian Research Council's Discovery Grants Scheme. 
He also acknowledges the financial support of the European Community's Seventh Framework
Programme under contract number PIAPP-GA-284544-PARM-2. M

\section*{Appendix: Constraints on Counterexamples}
Counterexamples provide a useful way of checking proofs, and also of understanding the assumptions on which they rely. In Section 2, three important properties were listed which were satisfied by the definition of
$\Delta_6(s)$. The results given in Sections 2 and 3 will be illustrated briefly here using numerical examples based on algebraic modifications of $\Delta_6(s)$, which might be considered capable of providing counterexamples.

The first modification illustrated consists of inserting zeros off the critical line in the denominator of $\Delta_6(s)$, in such a way as to preserve its functional equation. The zeros of the denominator are artificially inserted using the multiplicative factor:
\begin{equation}
{\cal D}_6(s)=(s-s_0)(s-(1-s_0))(s-\overline{s_0})(s-(1-\overline{s_0})),
\label{app1}
\end{equation}
with $s_0$ specifying the additional zero, chosen to lie off the critical line. The results of this modification are shown in Fig. \ref{figapp1}. The extra zeros in the denominator have destroyed source neutrality, resulting
in the pause regions corresponding to quadrants 1, 2 and 3 becoming extended far to the left of the critical line, in a way not permitted by the asymptotic result inTheorem 2.4. The destruction of source neutrality also results in the region of phase corresponding to the fourth quadrant becoming multiply rather than simply connected.

%%%%%%%%%%%%%%%%%%%%%%%%%%%%%%%%%%%%%%%%%%%%%%%%%%%%%%%%%%%%%%%%%%%%%
\begin{figure}[h]
\includegraphics[width=3.0in]{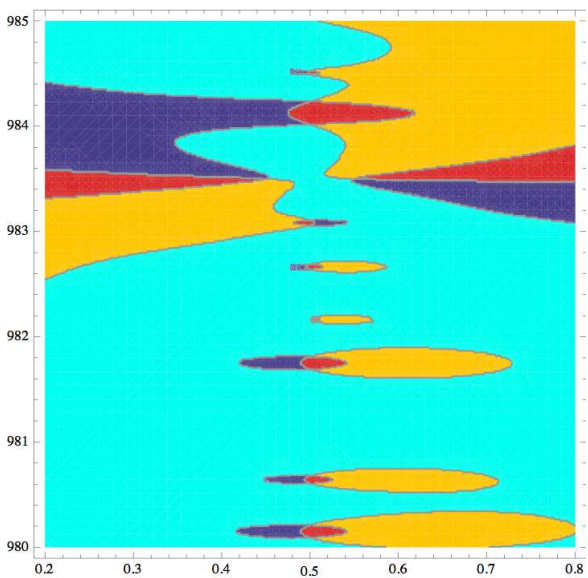}
%~~\includegraphics[width=3.0in]{negcp6A3.pdf}
%\includegraphics[width=3.0in]{transpcfdc.pdf}
\caption{ Contour plot of the phase of $\Delta_6(s)$, with its denominator having artificial off-axis zeros introduced as described in equation (\ref{app1}). Here $s_0=0.45+983.5 i$.}
\label{figapp1}
 \end{figure}
%%%%%%%%%%%%%%%%%%%%%%%%%%%%%%%%%%%%%%%%%%%%%%%%%%%%%%%%%%%%%%%%%%%%%

The second modification exemplified consists of restoring source neutrality, by multiplying the numerator of $\Delta_6(s)$ by a factor introducing four zeros on the critical line:
\begin{equation}
{\cal N}_6(s)=(s-s_1)(s-s_2))(s-\overline{s_1})(s-\overline{s_2}),
\label{app2}
\end{equation}
with $s_1$ and $s_2$ chosen on the critical line, close to the off-axis zeros of the denominator. As shown in Fig. \ref{figapp2}, this modification gives an isolated region containing the two off-axis poles of  
$\Delta_6(s)$ and the two on-axis zeros. The phase region corresponding to quadrant 4 is  simply connected once more, but first quadrant phase region no longer extends  to $\sigma=\infty$, as required by Theorem 2.3.

%%%%%%%%%%%%%%%%%%%%%%%%%%%%%%%%%%%%%%%%%%%%%%%%%%%%%%%%%%%%%%%%%%%%%
\begin{figure}[h]
\includegraphics[width=3.0in]{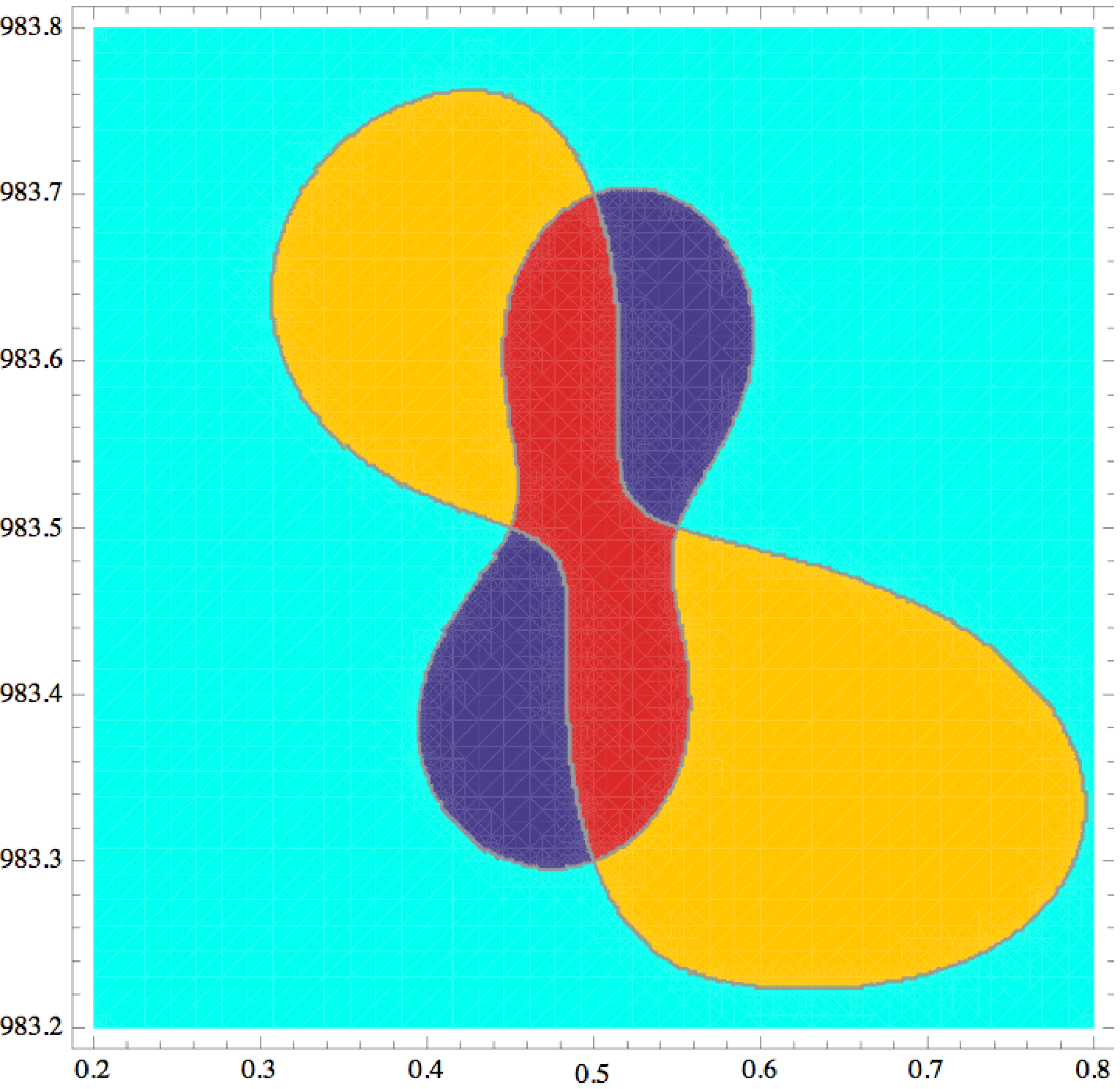}
~~\includegraphics[width=3.0in]{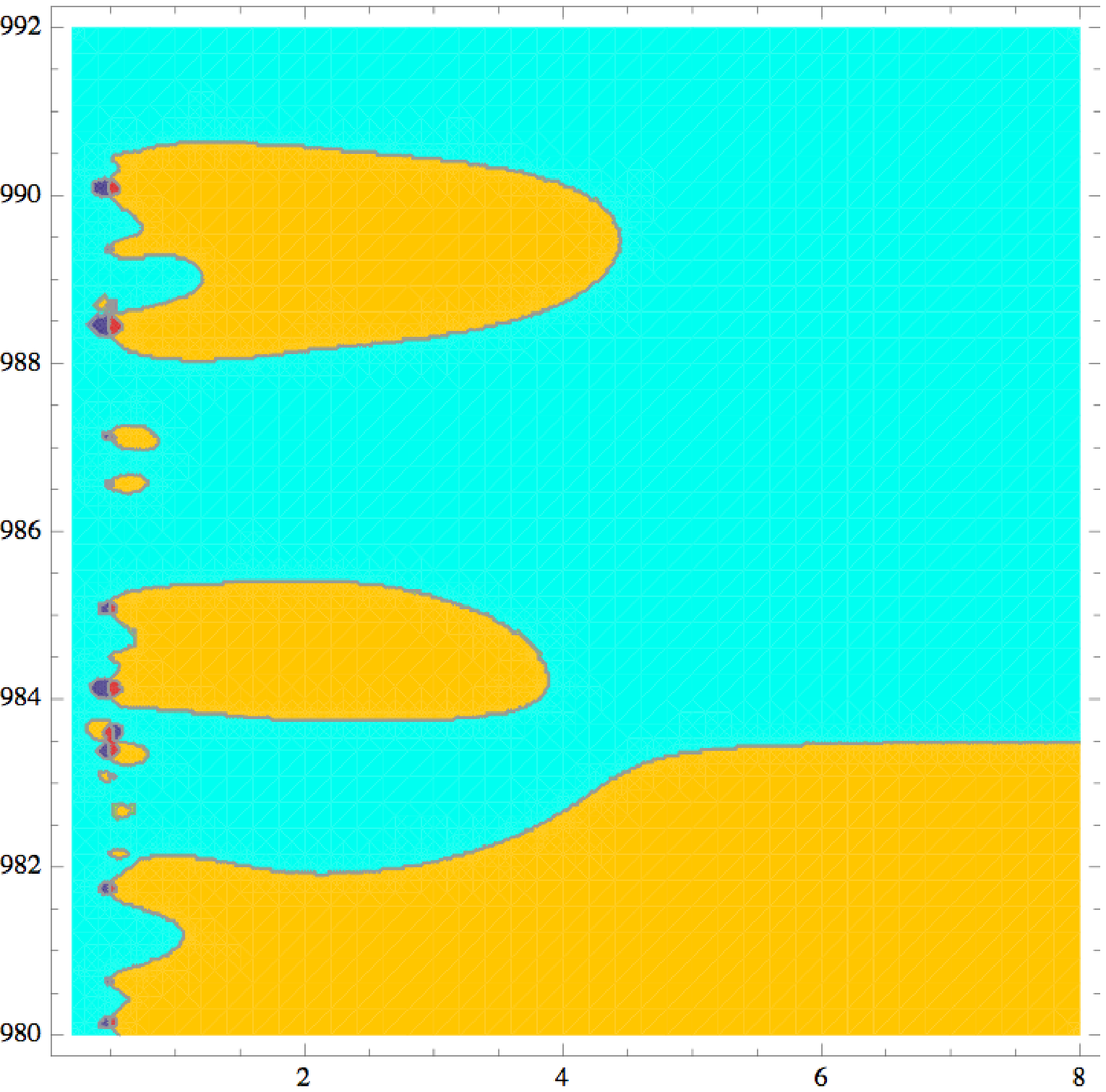}
\caption{ Contour plot of the phase of $\Delta_6(s)$, with its denominator having artificial off-axis zeros introduced as described in equation (\ref{app1}), and its numerator having a compensating number
of on-axis zeros introduced as described in equation (\ref{app2}). Here $s_0=0.45+983.5 i$,  $s_1=1/2+983.3 i$ and $s_2=1/2+983.7 i$ .}
\label{figapp2}
 \end{figure}
%%%%%%%%%%%%%%%%%%%%%%%%%%%%%%%%%%%%%%%%%%%%%%%%%%%%%%%%%%%%%%%%%%%%%

Thus, neither modification described provides an appropriate form for a legitimate counterexample. The failure of both is independent of the choice of the complex parameters. Indeed, the modification factor
in the second case has the expansion if $s\rightarrow \infty$ of
\begin{equation}
\frac{{\cal N}_6(s)}{{\cal D}_6(s)}=1-\frac{2 \delta\sigma_0^2+t_1^2+t_2^2-2 t_0^2}{s^2}+O(1/s^4).
\label{app3}
\end{equation}
Here $s_0=1/2+\delta\sigma_0+i t_0$, $s_1=1/2+i t_1$, $s_2=1/2+i t_2$. If the coefficient of the term in $1/s^2$ is required to be zero giving the required value of $t_0$, the expansion becomes
\begin{equation}
\frac{{\cal N}_6(s)}{{\cal D}_6(s)}=1-\frac{(4 \delta\sigma_0^2+(t_1-t_2)^2)(4 \delta\sigma_0^2+(t_1+t_2)^2)}{4 s^4}+O(1/s^6).
\label{app4}
\end{equation}
This algebraic term has a coefficient of the term in $1/s^4$ which can never be zero. Thus, the second modified form is  always incompatible with the exponential behaviour of $\Delta_6(s)$ as $s\rightarrow \infty$.

\end{document}